\newcommand{\tr}{\text{tr}}
\newcommand{\one}{\mbox{$1 \hspace{-1.0mm}  {\bf l}$}}
\newtheorem{theorem}{Theorem}
\newtheorem{remark}[theorem]{Remark}
\newtheorem{corollary}[theorem]{Corollary}
\newtheorem{lemma}[theorem]{Lemma}
\newtheorem{definition}[theorem]{Definition}
\begin{document}

\title{Characterization of preorders induced by positive maps in the set of Hermitian matrices}

\date{}

\author{Julio I. de Vicente}\ead{jdvicent@math.uc3m.es}
\address{Departamento de Matemáticas, Universidad Carlos III de Madrid,\\ Avda. de la Universidad 30, 28911, Leganés (Madrid), Spain}

\begin{abstract}
Uhlmann showed that there exists a positive, unital and trace-preserving map transforming a Hermitian matrix $A$ into another $B$ if and only if the vector of eigenvalues of $A$ majorizes that of $B$. In this work I characterize the existence of such a transformation when one of the conditions of unitality or trace preservation is dropped. This induces two possible preorders in the set of Hermitian matrices and I argue how this can be used to construct measures quantifying the lack of positive semidefiniteness of any given Hermitian matrix with relevant monotonicity properties. It turns out that the measures in each of the two formalisms are essentially unique.
\end{abstract}

\begin{keyword}
Hermitian matrices\sep positive semidefinite matrices\sep linear positive maps\sep unital maps\sep trace-preserving maps

\MSC 15A04\sep 15A86\sep 15B48\sep 15B57
\end{keyword}

\maketitle

\section{Introduction, main results and conclusions}

Let $M_n$ denote the set of square matrices with complex entries of size $n$ and $H_n$ the subset of Hermitian matrices in $M_n$. A positive semidefinite (negative semidefinite) matrix $A\in H_n$ is denoted by $A\geq0$ ($A\leq0$). A positive map is a linear map $\Phi:H_n\to H_k$ such that $\Phi(A)\geq0$ $\forall A\geq0$. A linear map $\Phi: M_n\to M_k$ is said to be trace-preserving if $\tr\Phi(A)=\tr A$ $\forall A\in M_n$, where $\tr$ stands for the trace of a matrix, and it is said to be unital if $\Phi(\one)=\one$, where $\one$ stands for the identity matrix (in order to ease the notation I do not specify the dimension of the identity matrix when it should be clear from the context). The following theorem is due to Uhlmann \cite{Uhl1,Uhl2,Uhl3} (see also \cite{Ando}):

\begin{theorem}
Let $A,B\in H_n$ with respective eigenvalues arranged in non-increasing order $\{\lambda_i(A)\}_{i=1}^n$ and $\{\lambda_i(B)\}_{i=1}^n$. Then, there exists a positive, unital and trace-preserving linear map $\Phi:H_n\to H_n$ such that $\Phi(A)=B$ if and only if the vector of eigenvalues of $A$ majorizes that of $B$, i.e.
\begin{align}
\sum_{i=1}^p\lambda_i(A)&\geq \sum_{i=1}^p\lambda_i(B),\quad 1\leq p\leq n-1\nonumber\\
\sum_{i=1}^n\lambda_i(A)&= \sum_{i=1}^n\lambda_i(B).
\end{align}
\end{theorem}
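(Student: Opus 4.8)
The plan is to prove both implications, treating the statement as the noncommutative analogue of the Hardy--Littlewood--Pólya theorem, which asserts that a vector $x$ majorizes $y$ precisely when $y=Dx$ for some doubly stochastic matrix $D$. The bridge between the scalar and matrix settings is the variational (Ky Fan) description of the partial eigenvalue sums,
$$\sum_{i=1}^p\lambda_i(M)=\max\{\tr(XM): 0\leq X\leq\one,\ \tr X=p\},$$
valid for any $M\in H_n$, whose maximum is attained at the spectral projection onto the top $p$ eigenvectors.

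For the necessity I would first record that the Hilbert--Schmidt adjoint $\Phi^*$, defined by $\tr(X\Phi(Y))=\tr(\Phi^*(X)Y)$, inherits all three properties: positivity of $\Phi$ gives positivity of $\Phi^*$, trace preservation of $\Phi$ is equivalent to unitality of $\Phi^*$, and unitality of $\Phi$ is equivalent to trace preservation of $\Phi^*$. Assuming $\Phi(A)=B$, trace preservation immediately yields $\tr A=\tr B$, i.e.\ the equality constraint. For the inequalities, fix $1\leq p\leq n-1$, let $X$ be the optimal projection for $B$ in the formula above, and set $Y=\Phi^*(X)$. Positivity and unitality of $\Phi^*$ force $0\leq Y\leq\one$, while trace preservation gives $\tr Y=p$, so $Y$ is admissible in the variational formula for $A$. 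Then
$$\sum_{i=1}^p\lambda_i(B)=\tr(XB)=\tr(X\Phi(A))=\tr(\Phi^*(X)A)=\tr(YA)\leq\sum_{i=1}^p\lambda_i(A),$$
which is exactly the majorization relation.

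For the sufficiency I would proceed constructively. Diagonalize $A=U\Lambda_A U^\dagger$ and $B=V\Lambda_B V^\dagger$ with $\Lambda_A,\Lambda_B$ carrying the ordered eigenvalues. Since $\lambda(A)$ majorizes $\lambda(B)$, Hardy--Littlewood--Pólya supplies a doubly stochastic matrix $D$ with $\lambda(B)=D\lambda(A)$. From $D$ I build the map $\Phi_0(X)=\sum_i\bigl(\sum_j D_{ij}\langle e_j|X|e_j\rangle\bigr)|e_i\rangle\langle e_i|$, which depends only on the diagonal of $X$: nonnegativity of the entries $D_{ij}$ gives positivity, the unit row sums give unitality, the unit column sums give trace preservation, and by construction $\Phi_0(\Lambda_A)=\Lambda_B$. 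Conjugating by the diagonalizing unitaries, the map $\Phi(X)=V\,\Phi_0(U^\dagger X U)\,V^\dagger$ is again positive, unital and trace-preserving (each conjugation being so), and it satisfies $\Phi(A)=B$.

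I expect the necessity direction to be the more delicate part, since it rests on correctly identifying the behaviour of the adjoint and on the variational formula for eigenvalue sums; the sufficiency is essentially an explicit construction once Birkhoff and Hardy--Littlewood--Pólya are invoked. A secondary point worth checking carefully is that the relaxed constraint set $\{0\leq X\leq\one,\ \tr X=p\}$ indeed attains its extrema at genuine rank-$p$ projections, so that the two forms of the Ky Fan identity coincide.
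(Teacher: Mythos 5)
Your proof is correct and complete. Note, however, that the paper does not prove this statement at all: it is quoted as Uhlmann's theorem with references to the original literature and to Ando's survey, and serves only as background for the paper's actual results on maps that are unital \emph{or} trace-preserving but not both. Your argument is the standard one for this result: the duality bookkeeping for $\Phi^*$ (positivity passes to the adjoint, unitality and trace preservation swap) combined with the Ky Fan variational formula handles necessity cleanly, and the sufficiency construction via Hardy--Littlewood--P\'olya and a ``pinching plus doubly stochastic action on the diagonal'' map conjugated by the diagonalizing unitaries is exactly right. Two small remarks: Birkhoff's theorem is not actually needed, since Hardy--Littlewood--P\'olya already hands you the doubly stochastic matrix $D$ directly; and the point you flag at the end --- that the relaxed set $\{0\leq X\leq\one,\ \tr X=p\}$ has the rank-$p$ projections as its extreme points, so the linear functional $\tr(\cdot\,A)$ attains its maximum there --- is the one lemma you should state and prove explicitly to make the necessity direction airtight.
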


This theorem extends to matrices the well-known result that there exists a $n\times n$ doubly stochastic matrix with non-negative entries $D$ such that $Dx=y$ for some $x,y\in\mathbb{R}^n$ if and only if $x$ majorizes $y$ \cite{Marshall}. This is because the condition of trace preservation is analogous to column stochasticity (i.e.\ $\sum_i(Dx)_i=\sum_ix_i$) and unitality to row stochasticity (i.e.\ $D{\bf 1}={\bf 1}$, where ${\bf 1}$ is the vector of all ones).

In this work I ask what the situation is if one of the two conditions that the positive map is unital or trace-preserving is dropped and I characterize for which matrices $A\in H_n$ and $B\in H_k$ there exists a positive unital (PU) linear map $\Phi:H_n\to H_k$ such that $\Phi(A)=B$ and the analogous question for positive trace-preserving (PTP) linear maps (notice that the condition that the map is both unital and trace-preserving imposes that $\Phi:H_n\to H_n$ while this is no longer the case if one of the conditions is lifted). Besides being a natural relaxation of the problem at hand that turns out to have a very compact answer, this is also motivated by the idea of developing measures that quantify the lack of positive semidefiniteness (or the lack of negative semidefiniteness) for Hermitian matrices. The standard approach in matrix theory to quantify the lack of structure of a matrix is by its distance in some norm to the set of structured matrices. Here I take a radically different starting point where the meaningfulness of the measure comes from fulfilling a monotonicity property with respect to a preorder. This is inspired by quantum resource theories that are used in quantum information theory in order to provide means to quantify to what degree a quantum state has a certain property based on the fact that a certain subset of quantum states does not have this property at all \cite{Chitambar}. In our case, this latter subset is the set of positive semidefinite matrices and we want to quantify how non-positive-semidefinite a Hermitian matrix might be.

Notice that the set of positive maps contains the identity map and is closed under composition; thus, this immediately implies that the existence of a positive map $\Phi$ such that $\Phi(A)=B$ induces a preorder in the set of Hermitian matrices, $A\to B$. Notice in addition that positive semidefinite matrices are ``at the bottom'' of this ordering relation: if $A$ is neither positive nor negative semidefinite, it then holds that $A\to B$ $\forall B\geq0$ but not the other way around (that $A\to B$ follows by considering e.g.\ the positive map $\Phi(X)=\frac{x^\ast Xx}{x^\ast Ax}B$ by choosing a vector $x$ such that $x^\ast A x>0$; that $B\nrightarrow A$ follows from the very definition of positive maps). Therefore, it seems natural to conclude that if $A\to B$, then $A$ is at least as non-positive-semidefinite as $B$. The same applies to negative semidefinite matrices. Due to the linearity of the maps, i.e. $\Phi(-A)=-\Phi(A)$, positive maps can also be defined by having the property of preserving the set of negative semidefinite matrices. Thus, it comes as a meaningful requirement to demand that quantitative measures of the lack of positive or negative semidefiniteness are monotonic with respect to this preorder together with the additional condition that they should vanish on the respective subsets. However, this preorder is trivial since it turns out that for any pair of non-definite matrices there exists positive maps $\Phi$ an $\Phi'$ such that $\Phi(A)=B$ and $\Phi'(B)=A$ and, therefore, $A\to B$ and $B\to A$ for any pair of Hermitian matrices $A,B$ which are neither positive semidefinite nor negative semidefinite. In order to fix this, an additional requirement must be added to the positivity of the map that will act as a gauge. A plausible condition and a natural ingredient in the theory of positive maps is precisely to demand that the positive map is either unital or trace-preserving. Notice that any of these two conditions forbids then positive maps that blow up or down the matrices: $\Phi(X)=cX$ for some real number $c>0$. Since both the set of PU maps and the set of PTP maps also contain the identity map and are closed under composition, both options lead as well to preorders in the set of Hermitian matrices. This motivates the following definitions.

\begin{definition}\label{monotones}
Let $\mu:\bigcup_{n\in\mathbb{N}}H_n\to\mathbb{R}$.
\begin{itemize}
\item $\mu$ is called a PU-monotone if $\mu(\Phi(A))\leq\mu(A)$ holds for every PU map $\Phi$ and every Hermitian matrix $A$.
\item $\mu$ is called a PTP-monotone if $\mu(\Phi(A))\leq\mu(A)$ holds for every PTP map $\Phi$ and every Hermitian matrix $A$.
\end{itemize}
\end{definition}

\begin{definition}\label{measures}
Let $\mu:\bigcup_{n\in\mathbb{N}}H_n\to[0,\infty)$.

\begin{itemize}
\item $\mu$ is called a PU-monotonic measure of the lack of positive semidefiniteness or PU$_{-}$ measure if it is a PU-monotone and $\mu(A)=0$ $\forall A\geq0$.
\item $\mu$ is called a PU-monotonic measure of the lack of negative semidefiniteness or PU$_{+}$ measure if it is a PU-monotone and $\mu(A)=0$ $\forall A\leq0$.
\item $\mu$ is called a PTP-monotonic measure of the lack of positive semidefiniteness or PTP$_{-}$ measure if it is a PTP-monotone and $\mu(A)=0$ $\forall A\geq0$.
\item $\mu$ is called a PTP-monotonic measure of the lack of negative semidefiniteness or PTP$_{+}$ measure if it is a PTP-monotone and $\mu(A)=0$ $\forall A\leq0$.
\end{itemize}
\end{definition}

Of course, one could demand that the positive map that defines the preorder was both unital and trace-preserving. However, there is no clear reason to ask for both conditions at the same time as we will see that one is enough in order to obtain a meaningful and non-trivial preorder. In fact, this enables not to impose that the source and target spaces of the maps are the same. Moreover, by making the conditions on the map less restrictive, one obtains more restrictive preorders and less functionals have the corresponding monotonicity property. Due to Theorem 1, any functional preserving the majorization relation for the eigenvalues of a Hermitian matrix (that is, any Schur-convex function of the eigenvalues \cite{Marshall}) will be monotonic under maps that are both PU and PTP. However, we will see that imposing one of these conditions in order to define the preorder leads to more specific functionals having the monotonicity property. Actually, it turns out that the measures of the lack of positive or negative semidefiniteness are essentially unique in both formalisms. Alternatively, in addition to providing a way of defining a unique measure for the lack of structure of a matrix, the theory of monotones borrowed from quantum resource theories gives technical means to characterize when linear transformations exist as each monotone leads to a necessary condition and the identification of all possible monotones to a sufficient condition.

Throughout this article I will use repeatedly that every Hermitian matrix $A$ can be decomposed uniquely into orthogonally supported positive and negative parts $A=A_+-A_-$ such that $A_+$ and $A_-$ are positive definite and $A_+A_-=0$. I will also follow the convention of Schatten $p$-norms and $||\cdot||_1$ will denote the trace norm (i.e.\ the sum of the singular values of the matrix) and $||\cdot||_\infty$ will denote the operator norm (i.e.\ the maximal singular value of the matrix). It is known that positive maps have contractivity properties under this norms. It follows from the Russo-Dye theorem (see e.g.\ \cite{Bhatia}) that $||\Phi(A)||_\infty\leq||A||_\infty$ for every PU map and for every Hermitian matrix $A$, i.e. $||\cdot||_\infty$ is a PU-monotone. It can also be proven (see e.g.\ \cite{Kossakowski1,Kossakowski2,Perez,Rivas}) that $||\Phi(A)||_1\leq||A||_1$ for every PTP map and for every Hermitian matrix $A$, i.e. $||\cdot||_1$ is a PTP-monotone\footnote{On the other hand, maps that are both PU and PTP are contractive under all Schatten $p$-norms for $1\leq p\leq \infty$ \cite{Perez}.}. Our main results are the following theorems, whose proofs are given in Secs. \ref{secn} and \ref{secs}.

\begin{theorem}\label{puth}
Let $A\in H_n$ and $B\in H_k$ such that they are neither positive nor negative semidefinite. Then, there exists a PU linear map $\Phi:H_n\to H_k$ such that $\Phi(A)=B$ if and only if $||A_+||_\infty\geq||B_+||_\infty$ and $||A_-||_\infty\geq||B_-||_\infty$.
\end{theorem}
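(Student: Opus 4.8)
The plan is to prove the two implications separately, using that, because $A$ is neither positive nor negative semidefinite, the quantities $||A_+||_\infty=\lambda_{\max}(A)>0$ and $||A_-||_\infty=-\lambda_{\min}(A)>0$ are exactly the top and (minus the) bottom eigenvalues of $A$, and similarly for $B$. For the necessity of the conditions I would combine positivity with unitality applied to two suitably shifted matrices. Since $||A_+||_\infty\,\one-A\geq0$, applying a PU map $\Phi$ with $\Phi(A)=B$ gives $||A_+||_\infty\,\one-B=\Phi(||A_+||_\infty\,\one-A)\geq0$, whence $||B_+||_\infty\leq||A_+||_\infty$; symmetrically, $A+||A_-||_\infty\,\one\geq0$ yields $B+||A_-||_\infty\,\one\geq0$ and thus $||B_-||_\infty\leq||A_-||_\infty$. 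This settles the forward direction using only positivity and $\Phi(c\,\one)=c\,\one$.

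For sufficiency I would first reduce to the case in which both $A$ and $B$ are diagonal. Conjugation $X\mapsto UXU^\ast$ by a unitary is a PU map, and PU maps are closed under composition, so from diagonalizations $UAU^\ast=D_A$, $WBW^\ast=D_B$ and a PU map $\Psi$ with $\Psi(D_A)=D_B$ one obtains the PU map $\Phi(X)=W^\ast\Psi(UXU^\ast)W$ satisfying $\Phi(A)=B$. Hence I may take $A=\mathrm{diag}(\lambda_1,\dots,\lambda_n)$ and $B=\mathrm{diag}(\mu_1,\dots,\mu_k)$ with entries in non-increasing order, where $\lambda_1=||A_+||_\infty$, $\lambda_n=-||A_-||_\infty$, and likewise for $B$.

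Then I would construct $\Phi$ as the pinching to the diagonal followed by a $k\times n$ nonnegative, row-stochastic matrix $T$ acting on the diagonal entries, i.e. $\Phi(X)=\mathrm{diag}\!\big(T(X_{11},\dots,X_{nn})^{\top}\big)$. Such a $\Phi$ is manifestly linear and positive (the diagonal entries of a positive semidefinite matrix are nonnegative and $T$ has nonnegative entries), and it is unital precisely because the rows of $T$ sum to one. The requirement $\Phi(A)=B$ becomes $T\lambda=\mu$, and row-stochasticity forces each $\mu_l$ to be a convex combination of $\lambda_1,\dots,\lambda_n$, i.e. to lie in $[-||A_-||_\infty,||A_+||_\infty]$. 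The hypotheses state precisely that every $\mu_l\in[-||B_-||_\infty,||B_+||_\infty]\subseteq[-||A_-||_\infty,||A_+||_\infty]$, so I can realize each entry as $\mu_l=t_l\lambda_1+(1-t_l)\lambda_n$ with $t_l=(\mu_l+||A_-||_\infty)/(||A_+||_\infty+||A_-||_\infty)\in[0,1]$, which defines $T$ explicitly and completes the construction.

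The estimates here are routine; the only genuine idea lies in the sufficiency direction, namely recognizing that the two extremal-eigenvalue inequalities are equivalent to the solvability of a nonnegative row-stochastic system $T\lambda=\mu$, and that an honest positive (not merely completely positive) map realizing it can be produced concretely by pinching composed with $T$. I therefore expect the main care to be in verifying that this single explicit $\Phi$ is simultaneously positive, unital, and correct on $A$, rather than in any delicate inequality.
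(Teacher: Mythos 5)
Your proof is correct. It does not follow the paper's main line of argument (Sections 2--3), which establishes necessity by exhibiting $\|A_\pm\|_\infty$ as PU-monotones built from robustness-type quantifiers, and sufficiency non-constructively by first classifying \emph{all} PU-monotones and then applying an indicator monotone $\mu_B$. Instead, your argument coincides essentially with the paper's own alternative proof in Section 5 (of Theorem \ref{puthg}): necessity via positivity and unitality applied to the shifted matrices $\lambda_{\max}(A)\one-A\geq0$ and $A-\lambda_{\min}(A)\one\geq0$, and sufficiency by observing that the hypotheses place every eigenvalue of $B$ in the convex hull of the eigenvalues of $A$ and then realizing the corresponding convex weights by a positive unital map. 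Your ``pinching followed by a nonnegative row-stochastic matrix $T$'' is exactly the paper's map $\Phi(P_i)=\sum_j p_{ij}Q_j$ with the orthogonal complement of $\mathrm{span}\{P_i\}$ in the kernel, just written in the eigenbasis; your explicit choice $t_l=(\mu_l+\|A_-\|_\infty)/(\|A_+\|_\infty+\|A_-\|_\infty)$ supplies concrete weights where the paper only invokes their existence. The trade-off is that your route is shorter, constructive and elementary, while the monotone route, though heavier, yields as a byproduct the classification results (Corollary \ref{corollarypumon} and Lemma \ref{lemmapumon}) showing that the PU$_\pm$ measures are essentially unique, which is one of the paper's stated goals.
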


\begin{theorem}\label{ptpth}
Let $A\in H_n$ and $B\in H_k$. Then, there exists a PTP linear map $\Phi:H_n\to H_k$ such that $\Phi(A)=B$ if and only if $\tr A=\tr B$, $||A_+||_1\geq||B_+||_1$ and $||A_-||_1\geq||B_-||_1$.
\end{theorem}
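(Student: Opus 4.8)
The plan is to prove the two implications separately, with necessity being essentially immediate from the facts already recorded in the introduction and sufficiency requiring an explicit construction of the map.

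For necessity, suppose a PTP map $\Phi$ with $\Phi(A)=B$ exists. Trace preservation gives $\tr B=\tr\Phi(A)=\tr A$ at once. Since $||\cdot||_1$ is a PTP-monotone, $||B||_1\leq||A||_1$. Because $A_+$ and $A_-$ are orthogonally supported positive semidefinite matrices one has $||A||_1=||A_+||_1+||A_-||_1$ and $\tr A=||A_+||_1-||A_-||_1$, and likewise for $B$. Thus $\tr A=\tr B$ reads $||A_+||_1-||A_-||_1=||B_+||_1-||B_-||_1$, while trace-norm contractivity reads $||A_+||_1+||A_-||_1\geq||B_+||_1+||B_-||_1$; adding these gives $||A_+||_1\geq||B_+||_1$ and subtracting them gives $||A_-||_1\geq||B_-||_1$.

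For sufficiency I would build a measure-and-prepare (entanglement-breaking-type) map. Let $P_+$ and $P_-$ be the orthogonal projections onto the supports of $A_+$ and $A_-$, fix any density matrix $\rho_0\in H_k$ (for concreteness the maximally mixed state $\one/k$ on $H_k$), set $\alpha=||B_+||_1/||A_+||_1$ and $\beta=||B_-||_1/||A_-||_1$ (both in $[0,1]$ by hypothesis), and define
\[
\Phi(X)=\alpha\,\tr(P_+X)\,\frac{B_+}{||B_+||_1}+\beta\,\tr(P_-X)\,\frac{B_-}{||B_-||_1}+\tr(F_0X)\,\rho_0,\qquad F_0=\one-\alpha P_+-\beta P_-.
\]
Since $P_+$ and $P_-$ have orthogonal ranges and $\alpha,\beta\leq1$, one checks $F_0\geq0$, so $\Phi$ is a sum of terms of the form $\tr(FX)\rho$ with $F\geq0$ and $\rho\geq0$ and is therefore positive; it is trace-preserving because its three output matrices all have unit trace and its three effects sum to $\one$. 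Finally, $\tr(P_+A)=||A_+||_1$ and $\tr(P_-A)=-||A_-||_1$, so the first two terms reproduce $B_+$ and $-B_-$ exactly, while $\tr(F_0A)=\tr A-\alpha||A_+||_1+\beta||A_-||_1=\tr A-||B_+||_1+||B_-||_1=0$ precisely because $\tr A=\tr B$; hence $\Phi(A)=B_+-B_-=B$.

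The construction is where the content lies. The crucial—and pleasantly clean—step is the vanishing of $\tr(F_0A)$: the slack effect $F_0$ is forced on us by trace preservation, and the trace equality $\tr A=\tr B$ is exactly what renders its contribution invisible to the image of $A$; this is the only place that hypothesis enters the sufficiency direction. The remaining obstacle is purely bookkeeping for the degenerate cases, which I would dispatch at the end: if $B_+=0$ (resp. $B_-=0$, forced when $A_+=0$ resp. $A_-=0$) the corresponding normalized term is simply omitted, and if $A=0$ the hypotheses force $B=0$, for which any PTP map (e.g. $X\mapsto(\tr X)\,\one/k$) works.
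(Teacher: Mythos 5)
Your proof is correct, but it follows a genuinely different route from the paper's, in both directions. For necessity, the paper builds the monotones $\mu_1^{\pm}$ from the ``global robustness'' construction of Lemma \ref{robustness} and then identifies them with $||A_{\pm}||_1$; you instead invoke the known trace-norm contractivity of PTP maps together with trace preservation and recover the two inequalities by adding and subtracting the identities $||A||_1=||A_+||_1+||A_-||_1$ and $\tr A=||A_+||_1-||A_-||_1$. This is a legitimate shortcut --- indeed the paper itself remarks that its conditions are equivalent to $\tr A=\tr B$ and $||A||_1\geq||B||_1$ --- though it leans on the cited contractivity result rather than deriving a monotone from scratch. For sufficiency, the paper is deliberately non-constructive: it first builds an explicit map only in the \emph{equality} case (Lemma \ref{lemmasptp}), uses that to show every PTP-monotone is a non-decreasing function of $||A_+||_1$ at fixed trace (Corollary \ref{corollaryptpmon}, Lemma \ref{lemmaptpmon}), and then concludes via the indicator monotone $\mu_B$. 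You instead write down a single measure-and-prepare map
\[
\Phi(X)=\alpha\,\tr(P_+X)\,\tfrac{B_+}{||B_+||_1}+\beta\,\tr(P_-X)\,\tfrac{B_-}{||B_-||_1}+\tr(F_0X)\,\rho_0
\]
that handles the general inequality case directly; I have checked that $F_0\geq0$, that the effects sum to $\one$, that $\tr(P_{\pm}A)=\pm||A_{\pm}||_1$, and that $\tr(F_0A)=0$ precisely when $\tr A=\tr B$, so the construction works, and your treatment of the degenerate cases ($B_{\pm}=0$, $A=0$) is adequate. What your approach buys is an explicit, self-contained map and a shorter proof; what the paper's approach buys is the classification of \emph{all} PTP-monotones (the essential uniqueness of the PTP$_{\pm}$ measures), which is a separate stated goal of the paper and does not follow from your argument alone.
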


Notice that the very last condition in Theorem \ref{ptpth} is redundant since $\tr A=||A_+||_1-||A_-||_1$ and it is only provided for the sake of the exposition. It is worth pointing out that the condition in this theorem is equivalent to $\tr A=\tr B$ and $||A||_1\geq||B||_1$; thus, the aforementioned condition that $||\Phi(A)||_1\leq||A||_1$ for PTP maps is not only necessary but sufficient if supplemented with the obvious trace-preservation condition. This is in contrast with PU maps where Theorem \ref{puth} shows that the condition $||A||_\infty\geq||B||_\infty$ is necessary but not sufficient.

The conditions on the negative and positive parts of the matrices give us monotonic measures for the lack of positive and negative semidefiniteness respectively in both formalisms. Thus, Theorems \ref{puth} and \ref{ptpth} state that essentially there are unique PU$_{\pm}$ measures: $\mu_\infty^\pm(A)=||A_{\pm}||_\infty$ and PTP$_{\pm}$ measures: $\mu_1^\pm(A)=||A_{\pm}||_1$ (cf.\ Lemmas \ref{lemmapumon} and \ref{lemmaptpmon} below). The additional premise in Theorem \ref{puth} that the matrices are neither positive nor negative semidefinite is necessary in order to characterize the possible transformations in terms of PU$_{\pm}$ measures. If one is not interested in this relation, the condition can be reformulated in such a way that it includes the case of definite matrices.

\begin{theorem}\label{puthg}
Let $A\in H_n$ and $B\in H_k$. Then, there exists a PU linear map $\Phi:H_n\to H_k$ such that $\Phi(A)=B$ if and only if $\lambda_{\max}(A)\geq\lambda_{\max}(B)$ and $\lambda_{\min}(A)\leq\lambda_{\min}(B)$, where $\lambda_{\max}$ and $\lambda_{\min}$ denote respectively the maximal and minimal eigenvalues of a matrix.
\end{theorem}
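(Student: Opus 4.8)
The plan is to prove the equivalence directly from the definition of a PU map, viewing Theorem \ref{puthg} as the version of Theorem \ref{puth} that dispenses with the non-definiteness hypothesis. For the ``only if'' direction I would use that a PU map preserves the Loewner order and fixes the scalar multiples of $\one$ that sandwich the spectrum of $A$; for the ``if'' direction I would exhibit an explicit PU map assembled from the extremal eigenvectors of $A$, chosen so that it also covers the semidefinite cases without modification.

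For the necessity direction, suppose $\Phi$ is PU with $\Phi(A)=B$. Since $\lambda_{\max}(A)\one-A\geq0$, positivity gives $\Phi(\lambda_{\max}(A)\one-A)\geq0$, and linearity together with $\Phi(\one)=\one$ turns this into $\lambda_{\max}(A)\one-B\geq0$, i.e.\ $\lambda_{\max}(B)\leq\lambda_{\max}(A)$. The same argument applied to $A-\lambda_{\min}(A)\one\geq0$ yields $B\geq\lambda_{\min}(A)\one$, i.e.\ $\lambda_{\min}(B)\geq\lambda_{\min}(A)$. This settles one direction using nothing beyond positivity, linearity and unitality.

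For the sufficiency direction, assume $\lambda_{\max}(A)\geq\lambda_{\max}(B)$ and $\lambda_{\min}(A)\leq\lambda_{\min}(B)$, so that every eigenvalue of $B$ lies in $[\lambda_{\min}(A),\lambda_{\max}(A)]$. I would fix unit eigenvectors $u,v$ of $A$ for $\lambda_{\max}(A)$ and $\lambda_{\min}(A)$, take a spectral decomposition $B=\sum_{j=1}^k b_j\,w_jw_j^\ast$ with $\{w_j\}$ orthonormal and $b_j=\lambda_j(B)$, and write $b_j=t_j\lambda_{\max}(A)+(1-t_j)\lambda_{\min}(A)$ with $t_j\in[0,1]$ (possible precisely because of the interval containment). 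Then I would define
$$\Phi(X)=\sum_{j=1}^k\bigl(t_j\,u^\ast Xu+(1-t_j)\,v^\ast Xv\bigr)\,w_jw_j^\ast.$$
This map is linear; it is positive since $u^\ast Xu,v^\ast Xv\geq0$ for $X\geq0$ while the coefficients $t_j,1-t_j$ and the matrices $w_jw_j^\ast$ are positive; it is unital because $u^\ast u=v^\ast v=1$ makes the bracket equal to $1$ at $X=\one$, giving $\sum_jw_jw_j^\ast=\one$; and it realizes the transformation since $u^\ast Au=\lambda_{\max}(A)$ and $v^\ast Av=\lambda_{\min}(A)$ force $\Phi(A)=\sum_j b_j\,w_jw_j^\ast=B$. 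The degenerate subcase $\lambda_{\max}(A)=\lambda_{\min}(A)$ means $A=\lambda\one$, whence the hypothesis forces $B=\lambda\one$, and the same formula (with any choice of $u=v$) still applies.

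I expect the genuine subtlety to be conceptual rather than computational: the tempting route of invoking Theorem \ref{puth} for the generic case and then patching the semidefinite boundary (e.g.\ by perturbing a semidefinite $B$ into a non-definite matrix and passing to a limit) introduces avoidable casework, whereas the extremal-eigenvector construction above works uniformly for all Hermitian $A,B$. It is then worth recording the consistency check that, when $A$ and $B$ are neither positive nor negative semidefinite, one has $||A_+||_\infty=\lambda_{\max}(A)$ and $||A_-||_\infty=-\lambda_{\min}(A)$, so the conditions of Theorem \ref{puthg} reduce exactly to those of Theorem \ref{puth}.
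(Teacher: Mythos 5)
Your proof is correct and takes essentially the same approach as the paper: the necessity argument (applying $\Phi$ to $\lambda_{\max}(A)\one-A\geq0$ and $A-\lambda_{\min}(A)\one\geq0$ and using unitality) is identical, and your sufficiency construction is a special case of the paper's, which writes each $\mu_j=\sum_i p_{ij}\lambda_i$ as a convex combination of \emph{all} eigenvalues of $A$ and sets $\Phi(P_i)=\sum_j p_{ij}Q_j$ on the full family of spectral projections, whereas you concentrate all the weight on the two extremal eigenprojections $uu^\ast$ and $vv^\ast$. Both rest on the same observation that the spectrum of $B$ lies in the convex hull of the spectrum of $A$; your variant simply makes the convex weights $t_j$ explicit.
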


This theorem is proven in Sec.\ \ref{secputhg} below, which in addition provides an alternative proof to Theorem \ref{puth} without making any explicit connection to monotonic measures of the lack of positive or negative semidefiniteness of a matrix.

The rest of this article is devoted to the technical work necessary to establish the above theorems. Before that, I would like to point out some concluding remarks. This paper characterizes the preorders induced in the set of Hermitian matrices by the action of linear positive and unital or trace-preserving maps. One of the main motivations was the derivation of measures of the lack of positive (or negative) semidefiniteness with meaningful monotonicity properties in the spirit of quantum resource theories in quantum information. One finds that such measures are essentially unique in both preorders, boiling down respectively to the 1-norm and the $\infty$-norm of the positive (or negative) part of the matrix. It turns out that these measures correspond to minimizing the distance to set of positive or negative semidefinite matrices in the corresponding norms, which are therefore singled out with respect to other choices of norm. However, it should be stressed that the axiomatic derivation taken here based on monotonicity does not necessarily lead to distance measures when extrapolated to other contexts. The analysis of the properties of structured matrices is ubiquitous in matrix theory. Many problems can be particularly well tackled provided one is dealing with matrices with a given structure while deviations from it can lead to quite a different behaviour. The construction of axiomatically-justified and well-behaved measures of the lack of structure of a matrix in the sense advocated here might provide a first step for the derivation of quantitative bounds on the differences in the behaviour one is interested in. A particular example of this is the analysis of eigenvalue algorithms for normal and non-normal matrices, which led Henrici \cite{Henrici} to introduce the so-called “departure from normality” and has motivated the study of measures of non-normality (see e.g. \cite{Elsner}). I have considered here the particular case in which the relevant structure is that of positive semidefinite matrices. However, the idea on which this work is based can be used for arbitrary structures. To explore these extensions and its usefulness in different applications of matrix theory is left as problem for future research. This would require the analysis of linear maps that preserve a given set of structured matrices. This is precisely the subject of study of the theory of linear preserver problems \cite{Li1,Li2,Pierce}, which can serve as a basis to carry out this program (for instance, linear normality-preserving maps are characterized in \cite{Kunicki}). On the other hand, looking at this connection the other way around, it is my hope that the constructions of monotonic measures from quantum resource theories utilized here to characterize transformations by positive maps might be of use in other linear preserver problems and related questions.

\section{Proof of Theorems \ref{puth} and \ref{ptpth}: Necessity of the conditions}\label{secn}

In this section I show that if there exists a PU or PTP map $\Phi$ such that $\Phi(A)=B$, then the conditions given in Theorems \ref{puth} and \ref{ptpth} on the matrices $A,B$ must hold. In order to do so, it suffices to see that $||A_{\pm}||_\infty$ are PU-monotones and $||A_{\pm}||_1$ are PTP-monotones (the trace preservation condition in the latter case is evident and I do not explicitly write it in this section). This task can be easily achieved using standard constructions of monotones from quantum resources theories (although direct proofs that do not rely on monotones can also be devised, see Sec.\ 5 below for the case of PU maps).
\begin{lemma}\label{robustness}
Let $A\in H_n$ and define
\begin{align}
\mu_\infty^-(A)&=\min\{p: A+pX\geq0, p\geq0, X\in H_n, ||X||_\infty\leq1\},\\
\mu_1^-(A)&=\min\{p: A+pX\geq0, p\geq0, X\in H_n, ||X||_1\leq1\}.
\end{align}
Then, $\mu_\infty^-$ is a PU$_-$ measure and $\mu_1^-$ is a PTP$_-$ measure.
\end{lemma}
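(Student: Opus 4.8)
The plan is to recognize $\mu_\infty^-$ and $\mu_1^-$ as instances of the \emph{robustness} construction familiar from quantum resource theories, for which the monotonicity is almost formal: the free operations (PU or PTP maps) preserve both the positive semidefinite cone and the norm constraint appearing in the variational definition, so an optimal perturbation for $A$ pushes forward to an admissible perturbation for $\Phi(A)$ with the same value of $p$. Accordingly I would organize the proof into (i) well-definedness and nonnegativity, (ii) vanishing on the positive semidefinite matrices, and (iii) the monotonicity inequality, treating the $\infty$-norm/PU and the $1$-norm/PTP cases in parallel.

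First I would check that the two quantities are well-defined nonnegative functions. For any $A\in H_n$, the choice $X=\one$ together with $p$ large enough gives $A+p\one\geq0$, so the feasible set is nonempty in both problems; since $p\geq0$ and one may restrict to $p$ in a compact interval $[0,p_0]$ determined by this feasible point, while the admissible $X$ range over the compact unit balls of $||\cdot||_\infty$ and $||\cdot||_1$ respectively, the constraint set is closed and the minimum is attained. By construction $\mu_\infty^-,\mu_1^-\geq0$, consistent with the requirement that a measure take values in $[0,\infty)$. The vanishing condition is immediate: if $A\geq0$ then $(p,X)=(0,\one)$ is feasible, forcing $\mu_\infty^-(A)=\mu_1^-(A)=0$.

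The heart of the argument is the monotonicity. Let $\Phi:H_n\to H_k$ be a PU map and let $(p,X)$ attain $\mu_\infty^-(A)$, so $A+pX\geq0$, $||X||_\infty\leq1$, and $p=\mu_\infty^-(A)$. Applying $\Phi$ and using linearity and positivity gives $\Phi(A)+p\,\Phi(X)=\Phi(A+pX)\geq0$, and $\Phi(X)\in H_k$ is Hermitian. The key point is that $\Phi(X)$ is again admissible: by the Russo-Dye bound recalled in the preliminaries, $||\Phi(X)||_\infty\leq||X||_\infty\leq1$. Hence $(p,\Phi(X))$ is feasible for the problem defining $\mu_\infty^-(\Phi(A))$, whence $\mu_\infty^-(\Phi(A))\leq p=\mu_\infty^-(A)$. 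The PTP case is identical after replacing $||\cdot||_\infty$ by $||\cdot||_1$ and Russo-Dye by the trace-norm contractivity $||\Phi(X)||_1\leq||X||_1$ of PTP maps: an optimal perturbation $X$ for $\mu_1^-(A)$ pushes forward to the admissible perturbation $\Phi(X)$ for $\mu_1^-(\Phi(A))$, again without increasing $p$.

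The only genuinely nontrivial inputs are the two contractivity properties that guarantee the norm constraint survives the push-forward; these are precisely the facts quoted from the literature in the preliminaries, and everything else is the generic robustness argument. I therefore expect no real obstacle here beyond correctly invoking the right norm bound in each of the two cases. If one prefers to sidestep the attainment of the minimum, the same conclusion follows by applying $\Phi$ to an $\varepsilon$-optimal feasible pair $(p,X)$ and letting $\varepsilon\to0$.
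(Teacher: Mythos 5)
Your argument is correct and is essentially the paper's own proof: apply $\Phi$ to an optimal (or near-optimal) feasible pair, use linearity and positivity to preserve the semidefiniteness constraint, and use Russo--Dye (resp.\ trace-norm contractivity) to preserve the norm constraint. The only slip is cosmetic: in the $\mu_1^-$ problem the witness $X=\one$ violates $||X||_1\leq1$ for $n>1$, so for nonemptiness and for the vanishing on $A\geq0$ you should take, e.g., $X=\one/n$ or $X=0$ instead.
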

\begin{proof}
Obviously, $\mu_\infty^-(A)=\mu_1^-(A)=0$ $\forall A\geq0$, so it remains to see that these quantities are monotones. Consider first $\mu_\infty^-$ and any PU map $\Phi$ acting on any Hermitian matrix $A$. By definition of the measure, there exists a Hermitian matrix $X$ such that $||X||_\infty\leq1$ and $A+\mu_\infty^-(A)X\geq0$. Then, the linearity and the positivity of $\Phi$ implies that $\Phi(A)+\mu_\infty^-(A)\Phi(X)\geq0$. Now, the fact that the map is unital tells us that $||\Phi(X)||_\infty\leq1$ and, then, together with the previous condition, this immediately implies that $\mu_\infty^-(\Phi(A))\leq\mu_\infty^-(A)$ for every PU map $\Phi$, i.e.\ $\mu_\infty^-$ is a PU-monotone. The argument to see that $\mu_1^-$ is a PTP monotone follows the same lines using that PTP maps are contractive with respect to the 1-norm.
\end{proof}
\begin{lemma}
Let $A\in H_n$. It holds that
\begin{align}
\mu_\infty^-(A)&=||A_-||_\infty,\\
\mu_1^-(A)&=||A_-||_1.
\end{align}
\end{lemma}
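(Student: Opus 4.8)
The plan is to establish each equality by proving two matching inequalities, exploiting the orthogonal decomposition $A = A_+ - A_-$ with $A_+ A_- = 0$. Write $P_-$ for the orthogonal projection onto the support of $A_-$ (the negative eigenspace of $A$), so that $P_- A_- = A_- = A_- P_-$, $P_- A_+ = 0$, and $\|P_-\|_\infty = 1$. Both quantities are minima over feasible pairs $(p,X)$, so for the upper bounds I would exhibit explicit feasible points and for the lower bounds I would test the constraint $A + pX \geq 0$ against a suitable positive object.

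For the upper bounds I would display explicit feasible $X$. For $\mu_\infty^-$, taking $p = \|A_-\|_\infty$ and $X = P_-$ gives $\|X\|_\infty = 1$ and $A + pX = A_+ + (\|A_-\|_\infty P_- - A_-) \geq 0$, since $A_+ \geq 0$ and on the support of $A_-$ every eigenvalue of $A_-$ is at most $\|A_-\|_\infty$, whence $\|A_-\|_\infty P_- - A_- \geq 0$; this yields $\mu_\infty^-(A) \leq \|A_-\|_\infty$. For $\mu_1^-$, taking $p = \|A_-\|_1$ and $X = A_-/\|A_-\|_1$ gives $\|X\|_1 = \tr X = 1$ and $A + pX = A_+ \geq 0$, so $\mu_1^-(A) \leq \|A_-\|_1$.

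For the lower bounds I would pair the feasibility constraint with the appropriate dual object. In the operator-norm case, let $v$ be a unit eigenvector of $A$ with $A v = -\|A_-\|_\infty v$; then $0 \leq v^*(A + pX)v = -\|A_-\|_\infty + p\,v^*Xv \leq -\|A_-\|_\infty + p$, where the last step uses $v^* X v \leq \|X\|_\infty \leq 1$, forcing $p \geq \|A_-\|_\infty$. In the trace-norm case I would instead pair the constraint with $P_-$: since $P_- \geq 0$ and $A + pX \geq 0$ their trace inner product is nonnegative, and $\tr(P_- A) = -\tr(A_-) = -\|A_-\|_1$, so
\begin{equation}
0 \leq \tr\big(P_-(A + pX)\big) = -\|A_-\|_1 + p\,\tr(P_- X) \leq -\|A_-\|_1 + p,
\end{equation}
where the final inequality uses the H\"older-type bound $|\tr(P_- X)| \leq \|P_-\|_\infty \|X\|_1 \leq 1$; hence $p \geq \|A_-\|_1$. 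Taking the minimum over feasible $p$ gives the two lower bounds, which combine with the upper bounds to prove both identities.

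The argument is elementary once the right test object is selected, so I do not expect a serious obstacle; the only point requiring care is matching each norm constraint to its correct dual pairing — a single eigenvector together with $v^*Xv \leq \|X\|_\infty$ for the operator norm, versus the full projection $P_-$ together with the trace duality $|\tr(P_- X)| \leq \|P_-\|_\infty\|X\|_1$ for the trace norm — and checking that the explicit choices of $X$ are genuinely feasible, in particular the positivity of $\|A_-\|_\infty P_- - A_-$.
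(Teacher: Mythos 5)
Your proof is correct. The upper bounds are obtained exactly as in the paper (exhibit a feasible pair; your choice $X=P_-$ for the operator-norm case is a harmless variant of the paper's $X=A_-/\|A_-\|_\infty$, and your trace-norm witness is identical), and the operator-norm lower bound via a single eigenvector $v$ with $Av=-\|A_-\|_\infty v$ is just the $i=1$ instance of the Weyl-type inequality the paper invokes. Where you genuinely diverge is the trace-norm lower bound: the paper deduces $p\lambda_i(X)\geq\lambda_i(-A)$ for all $i$ from $pX\geq -A$ (Weyl's monotonicity principle) and then sums over the $k$ negative eigenvalues of $A$, bounding $\sum_{i=1}^k\lambda_i(X)$ by $\|X\|_1$; you instead pair the constraint $A+pX\geq0$ against the fixed positive test object $P_-$ and close with the H\"older duality $|\tr(P_-X)|\leq\|P_-\|_\infty\|X\|_1$. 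Your route is arguably cleaner: it treats both norms by the same dual-pairing template (a rank-one projector for $\|\cdot\|_\infty$, the full negative projector for $\|\cdot\|_1$), it avoids invoking eigenvalue monotonicity for the difference of two Hermitian matrices, and it sidesteps the slightly delicate chain of inequalities the paper uses to bound the partial eigenvalue sum of $X$. The paper's version, in exchange, makes the eigenvalue mechanism behind both bounds uniform and explicit. One small point of care, which you flag yourself and handle correctly: the positivity of $\|A_-\|_\infty P_- - A_-$ and the identities $\tr(P_-A_+)=0$, $\tr(P_-A_-)=\|A_-\|_1$ all rest on the orthogonality $A_+A_-=0$ of the decomposition, so nothing is missing.
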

\begin{proof}
Clearly,
\begin{equation}
A+||A_-||_\infty\frac{A_-}{||A_-||_\infty}\geq0,\quad A+||A_-||_1\frac{A_-}{||A_-||_1}\geq0,
\end{equation}
which shows that $\mu_\infty^-(A)\leq||A_-||_\infty$ and $\mu_1^-(A)\leq||A_-||_1$. To see the inequality in the other direction, notice that $A+pX\geq0$ implies that
\begin{equation}\label{eqlemma2n}
p\lambda_i(X)\geq\lambda_i(-A)
\end{equation}
holds for the non-increasingly ordered eigenvalues of the matrices $\forall i$. Thus, considering the case $i=1$ and using that $\lambda_1(X)\leq||X||_\infty\leq1$ we have that
\begin{equation}
p\geq\frac{||A_-||_\infty}{\lambda_1(X)}\geq||A_-||_\infty,
\end{equation}
proving that $\mu_\infty^-(A)\geq||A_-||_\infty$. To obtain the analogous claim for $\mu_1^-$, let $k$ denote the number of negative eigenvalues of $A$. Then, $\sum_{i=1}^k\lambda_i(-A)=||A_-||_1$ and $\sum_{i=1}^k\lambda_i(X)\leq\tr X\leq||X||_1\leq1$. Thus, using again Eq.\ (\ref{eqlemma2n}) we obtain that
\begin{equation}
p\geq\frac{||A_-||_1}{\sum_{i=1}^k\lambda_i(X)}\geq||A_-||_1,
\end{equation}
and, therefore, $\mu_1^-(A)\geq||A_-||_1$.
\end{proof}
\begin{lemma}
Let $A\in H_n$ and define
\begin{align}
\mu_\infty^+(A)&=\mu_\infty^-(-A)=||A_+||_\infty,\\
\mu_1^+(A)&=\mu_1^-(-A)=||A_+||_1.
\end{align}
Then, $\mu_\infty^+$ is a PU$_+$ measure and $\mu_1^+$ is a PTP$_+$ measure.
\end{lemma}
\begin{proof}
As in the previous case it is clear that $\mu_\infty^+(A)=\mu_1^+(A)=0$ $\forall A\leq0$. Thus, it only remains to check that the corresponding monotonicity property holds; however, this follows straightforwardly from that of $\mu_\infty^-$ and $\mu_1^-$, i.e.\
\begin{equation}
\mu_{\infty,1}^+(\Phi(A))=\mu_{\infty,1}^-(-\Phi(A))=\mu_{\infty,1}^-(\Phi(-A))\leq\mu_{\infty,1}^-(-A)=\mu_{\infty,1}^+(A)
\end{equation}
for, depending on the corresponding case, every PU or PTP map $\Phi$.
\end{proof}

This concludes what needed to be proven in this section.
\begin{corollary}
Let $A\in H_n$ and $B\in H_k$. If there exists a PU (PTP) linear map $\Phi:H_n\to H_k$ such that $\Phi(A)=B$, then it must hold that $||A_+||_\infty\geq||B_+||_\infty$ and $||A_-||_\infty\geq||B_-||_\infty$ ($||A_+||_1\geq||B_+||_1$ and $||A_-||_1\geq||B_-||_1$).
\end{corollary}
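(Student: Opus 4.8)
The plan is simply to assemble the three preceding lemmas, which already contain all the substantive content. Each of the four quantities $\mu_\infty^\pm$ and $\mu_1^\pm$ has been shown to be a monotone for the appropriate class of maps (PU for the $\infty$-norm quantities, PTP for the $1$-norm quantities), and each has been identified with the corresponding norm of the positive or negative part of its argument. The corollary is then nothing more than reading off these two facts in combination.

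Concretely, for the PU case I would take a PU map $\Phi$ with $\Phi(A)=B$ and invoke the PU-monotonicity of $\mu_\infty^-$ to obtain $\mu_\infty^-(B)=\mu_\infty^-(\Phi(A))\leq\mu_\infty^-(A)$; by the identification $\mu_\infty^-(X)=||X_-||_\infty$ this is exactly $||B_-||_\infty\leq||A_-||_\infty$. Applying the same reasoning to the PU-monotone $\mu_\infty^+$, together with $\mu_\infty^+(X)=||X_+||_\infty$, yields $||B_+||_\infty\leq||A_+||_\infty$. The PTP case is verbatim the same argument with $\mu_1^\pm$ in place of $\mu_\infty^\pm$ and the $1$-norm in place of the $\infty$-norm, using that these are PTP-monotones and satisfy $\mu_1^\pm(X)=||X_\pm||_1$.

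There is no genuine obstacle here: all the difficulty was already absorbed into establishing that the $\mu$'s are monotones and into evaluating them. The only point requiring a moment's care is to apply the monotonicity in the correct direction, namely $\mu(\Phi(A))\leq\mu(A)$ with the substitution $\Phi(A)=B$, so that the resulting inequalities correctly bound the relevant norm of the image $B$ above by that of the source $A$, and never the reverse.
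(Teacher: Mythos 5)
Your proposal is correct and matches the paper's route exactly: the corollary is stated in the paper as the immediate combination of the preceding lemmas, which establish that $\mu_\infty^\pm$ and $\mu_1^\pm$ are PU- and PTP-monotones respectively and evaluate them as $||A_\pm||_\infty$ and $||A_\pm||_1$. Applying monotonicity in the direction $\mu(\Phi(A))\leq\mu(A)$, as you note, is all that remains.
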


\begin{remark}
The construction used in Lemma \ref{robustness} for $\mu_\infty^-$ and $\mu_1^-$ corresponds to a standard quantifier in quantum resource theories: the so-called global robustness \cite{Chitambar}. Many other choices are possible (cf.\ \cite{Chitambar}) for any $A\in H_n$ such as the norm distance
\begin{equation}
\inf_{X\geq0}||A-X||
\end{equation}
or the absolute robustness
\begin{equation}
\min\{p: A+pX\geq0, p\geq0, X\geq0, ||X||\leq1\},
\end{equation}
which can be readily verified to be PU or PTP monotones if the norms are respectively taken to be the $\infty$-norm or the $1$-norm using the contractivity under them of the corresponding maps. The reader can check that in this case these quantities also boil down to $||A_-||_{\infty}$ or $||A_-||_1$. This comes as no surprise in the light of the results of the next section. All PU$_-$ measures and all PU$_+$ measures are essentially bound to this form.
\end{remark}

\section{Proof of Theorems \ref{puth} and \ref{ptpth}: Sufficiency of the conditions}\label{secs}

In this section I show that if the conditions given in Theorems \ref{puth} and \ref{ptpth} on the matrices $A,B$ hold, then there exists a PU or PTP map $\Phi$ such that $\Phi(A)=B$. The line of argumentation is the same for both unital and trace-preserving maps. I first explicitly construct transformations that enable to identify matrices $A,B$ for which it holds that $A\to B$ and $B\to A$ under the preorders induced by both classes of positive maps. Then, I use this to characterize the form of any monotone. This, in turn, allows one to prove using a very simple reasoning the sufficiency of the aforementioned conditions in a non-constructive way.

\subsection{Unital maps}

\begin{lemma}\label{lemmaspu}
Let $A\in H_n$ and $B\in H_k$ be neither positive nor negative semidefinite matrices such that $||A_+||_\infty=||B_+||_\infty$ and $||A_-||_\infty=||B_-||_\infty$. Then, there exists a PU linear map $\Phi:H_n\to H_k$ such that $\Phi(A)=B$ and a PU linear map $\Phi':H_k\to H_n$ such that $\Phi'(B)=A$.
\end{lemma}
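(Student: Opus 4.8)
The plan is to build $\Phi$ explicitly by exploiting that, for indefinite matrices, the hypotheses pin down only the extreme eigenvalues. First I would observe that since $A$ and $B$ are neither positive nor negative semidefinite, one has $||A_+||_\infty=\lambda_{\max}(A)>0$ and $||A_-||_\infty=-\lambda_{\min}(A)>0$, and likewise for $B$; hence the two norm equalities are equivalent to $\lambda_{\max}(A)=\lambda_{\max}(B)=:a>0$ and $\lambda_{\min}(A)=\lambda_{\min}(B)=:-b<0$. In particular every eigenvalue $\beta_m$ of $B$ lies in the interval $[-b,a]=[\lambda_{\min}(A),\lambda_{\max}(A)]$.

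The key idea is to seek a map whose output is diagonal in an eigenbasis of $B$, with coefficients given by positive unital functionals of the input evaluated at $A$. Writing the spectral decomposition $B=\sum_{m=1}^k\beta_m w_m w_m^*$ with $\{w_m\}$ orthonormal, I would set
\begin{equation}
\Phi(X)=\sum_{m=1}^k \tr(\rho_m X)\, w_m w_m^*,
\end{equation}
where each $\rho_m\in H_n$ is to be chosen positive semidefinite with $\tr\rho_m=1$. Any such choice makes $\Phi$ linear and positive (for $X\geq0$ it is a nonnegative combination of the projectors $w_m w_m^*$) and unital (since $\tr\rho_m=1$ forces $\Phi(\one)=\sum_m w_m w_m^*=\one_k$). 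The only remaining requirement, $\Phi(A)=B$, then amounts to the scalar conditions $\tr(\rho_m A)=\beta_m$ for every $m$.

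To meet these I would use that the values $\{\tr(\rho A):\rho\geq0,\ \tr\rho=1\}$ fill exactly the interval $[\lambda_{\min}(A),\lambda_{\max}(A)]$. Concretely, taking unit eigenvectors $u,v$ of $A$ for the eigenvalues $a$ and $-b$ respectively and setting $\rho_m=t_m\, u u^* + (1-t_m)\, v v^*$ with $t_m=(\beta_m+b)/(a+b)\in[0,1]$, one obtains $\tr(\rho_m A)=t_m a-(1-t_m)b=\beta_m$, as desired. This completes the construction of $\Phi$. The map $\Phi'$ is produced by the identical recipe with the roles of $A$ and $B$ interchanged, which is legitimate because the hypotheses are symmetric in $A$ and $B$ and $B$ is likewise indefinite with the same extreme eigenvalues.

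I do not expect a genuine technical obstacle: once the diagonal-output ansatz with state functionals is in place, positivity and unitality are automatic, and the single interpolation condition $\tr(\rho_m A)=\beta_m$ is always solvable precisely because $\beta_m$ lies between the extreme eigenvalues of $A$. The only point requiring care is the initial reduction of the norm hypotheses to the equalities of extreme eigenvalues, which genuinely relies on the indefiniteness assumption and, in passing, guarantees $a+b>0$ so that $t_m$ is well defined.
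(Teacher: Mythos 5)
Your proof is correct, but it takes a genuinely different (and considerably simpler) route than the paper's own proof of this lemma. The paper constructs $\Phi$ by prescribing its action on \emph{all} the spectral projections $P_i$ of $A$, sending each to a nonnegative combination of the spectral projections of $B$, with a fairly heavy case analysis (index sets $I_1^\pm,\dots,I_4^\pm$) needed to tune the coefficients so that both $\Phi(A)=B$ and $\Phi(\one)=\one$ hold. You instead use a ``measure-and-prepare'' ansatz $\Phi(X)=\sum_m \tr(\rho_m X)\,w_mw_m^*$ with states $\rho_m$ supported only on the top and bottom eigenvectors of $A$, reducing everything to the one-dimensional interpolation $\tr(\rho_m A)=\beta_m$, which is solvable precisely because indefiniteness turns the norm hypotheses into $\lambda_{\max}(A)=\lambda_{\max}(B)$ and $\lambda_{\min}(A)=\lambda_{\min}(B)$, so each $\beta_m$ lies in $[\lambda_{\min}(A),\lambda_{\max}(A)]$. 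All the verifications you give (positivity, unitality via $\tr\rho_m=1$, the computation $t_m a-(1-t_m)b=\beta_m$, and $a+b>0$) check out. In fact, your argument is essentially the convex-hull construction the paper itself uses later, in Section \ref{secputhg}, to prove the more general Theorem \ref{puthg}, specialized to weights supported on the two extreme eigenvectors; what you lose relative to the paper's Lemma \ref{lemmaspu} construction is nothing for the purposes of this statement, and what you gain is a proof that is shorter, transparent, and makes the role of the indefiniteness hypothesis explicit.
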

\begin{proof}
Clearly, it suffices to construct a linear PU map that maps any given non-definite $A\in H_n$ to any given non-definite $B\in H_k$ such that $||A_+||_\infty=||B_+||_\infty$ and $||A_-||_\infty=||B_-||_\infty$. Let $A$ and $B$ be such matrices with spectral decomposition $A=\sum_i\lambda_iP_i$ and $B=\sum_i\mu_iQ_i$ where $\{\lambda_i\}$ ($\{\mu_i\}$) are the eigenvalues of $A$ ($B$) and the $\{P_i\}$ and $\{Q_i\}$ are rank-one mutually orthogonal orthogonal projections such that $\sum_iP_i=\one$ and $\sum_iQ_i=\one$. The map $\Phi$ that I am going to construct has the property that $\Phi(P_i)=\sum_jp_{ij}Q_j$ where $p_{ij}\geq0$ $\forall i,j$. Such a map can always be defined with the property that it is positive by putting all elements in the orthogonal complement of span$\{P_i\}$ in $H_n$ (with respect to the standard scalar product $\langle X,Y\rangle=\tr(XY)$) in the kernel of $\Phi$. For the sake of explicitly describing the action of this map on the $\{P_i\}$ and to verify in addition its unitality, it is convenient to rewrite the spectral decompositions as
\begin{equation}
A=\sum_{i\in I_A^+}\lambda_i^+P_i-\sum_{i\in I_A^-}\lambda_i^-P'_i,\quad B=\sum_{i\in I_B^+}\mu_i^+Q_i-\sum_{i\in I_B^-}\mu_i^-Q'_i,
\end{equation}
where all index sets take values in $\mathbb{N}$ starting from 1, $\lambda_1^\pm=\mu_1^\pm=||A_\pm||_\infty=||B_\pm||_\infty$ and $\lambda_i^+,\mu_i^+\geq0$ $\forall i$ and $\lambda_i^-,\mu_i^->0$ $\forall i$, i.e. any possible zero eigenvalues are included in the sets $\{\lambda_i^+\}$ and $\{\mu_i^+\}$ so that
\begin{equation}
\sum_{i\in I_A^+}P_i+\sum_{i\in I_A^-}P'_i=\one,\quad \sum_{i\in I_B^+}Q_i+\sum_{i\in I_B^-}Q'_i=\one.
\end{equation}
Consider now the following subsets of indices
\begin{align}
I_1^\pm&=\{i:\lambda_i^\pm\geq\mu_i^\pm,i\neq1\}\nonumber\\
I_2^\pm&=\{i:\lambda_i^\pm<\mu_i^\pm\}\nonumber\\
I_3^\pm&=\{i:i\in I_A^\pm,i\notin I_B^\pm\}\nonumber\\
I_4^\pm&=\{i:i\notin I_A^\pm,i\in I_B^\pm\}.
\end{align}
The map (which can be easily verified to be unital) is then defined by
\begin{align}
\Phi(P_1)&=Q_1+\frac{\lambda_1^-}{\lambda_1^-+\lambda_1^+}\left[\sum_{i\in I_1^+}\left(1-\frac{\mu_i^+}{\lambda_i^+}\right)Q_i+\sum_{i\in I_1^-}\left(1-\frac{\mu_i^-}{\lambda_i^-}\right)Q'_i\right]\nonumber\\
&+\sum_{i\in I_2^+}\left(1-\frac{\lambda_1^+-\mu_i^+}{\lambda_1^+-\lambda_i^+}\right)Q_i+\sum_{i\in I_4^+}\frac{\lambda_1^-+\mu_i^+}{\lambda_1^-+\lambda_1^+}Q_i+\sum_{i\in I_4^-}\frac{\lambda_1^--\mu_i^-}{\lambda_1^-+\lambda_1^+}Q'_i\nonumber\\
\Phi(P'_1)&=Q'_1+\frac{\lambda_1^+}{\lambda_1^-+\lambda_1^+}\left[\sum_{i\in I_1^+}\left(1-\frac{\mu_i^+}{\lambda_i^+}\right)Q_i+\sum_{i\in I_1^-}\left(1-\frac{\mu_i^-}{\lambda_i^-}\right)Q'_i\right]\nonumber\\
&+\sum_{i\in I_2^-}\left(1-\frac{\lambda_1^--\mu_i^-}{\lambda_1^--\lambda_i^-}\right)Q'_i+\sum_{i\in I_4^+}\frac{\lambda_1^+-\mu_i^+}{\lambda_1^-+\lambda_1^+}Q_i+\sum_{i\in I_4^-}\frac{\lambda_1^++\mu_i^-}{\lambda_1^-+\lambda_1^+}Q'_i\nonumber\\
\Phi(P_i)&=\frac{\mu_i^+}{\lambda_i^+}Q_i\quad (i\in I_1^+),\quad\quad\Phi(P'_i)=\frac{\mu_i^-}{\lambda_i^-}Q'_i\quad (i\in I_1^-)\nonumber\\
\Phi(P_i)&=\frac{\lambda_1^+-\mu_i^+}{\lambda_1^+-\lambda_i^+}Q_i\quad (i\in I_2^+),\quad\quad\Phi(P'_i)=\frac{\lambda_1^--\mu_i^-}{\lambda_1^--\lambda_i^-}Q'_i\quad (i\in I_2^-)\nonumber\\
\Phi(P_i)&=0\quad (i\in I_3^+),\quad\quad\Phi(P'_i)=0\quad (i\in I_3^-).
\end{align}
\end{proof}

\begin{corollary}\label{corollarypumon}
Any PU-monotone $\mu(A)$ is a function of $||A_+||_\infty$ and $||A_-||_\infty$ when restricted to non-definite matrices.
\end{corollary}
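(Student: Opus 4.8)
The plan is to turn the previous lemma into a statement about level sets. Lemma \ref{lemmaspu} tells us that whenever $A$ and $B$ are non-definite matrices with $||A_+||_\infty=||B_+||_\infty$ and $||A_-||_\infty=||B_-||_\infty$, there exist PU maps $\Phi$ and $\Phi'$ with $\Phi(A)=B$ and $\Phi'(B)=A$. So my first step is to fix any PU-monotone $\mu$ and feed these two maps into the defining inequality of Definition \ref{monotones}. Applying monotonicity to $\Phi$ gives $\mu(B)=\mu(\Phi(A))\leq\mu(A)$, and applying it to $\Phi'$ gives $\mu(A)=\mu(\Phi'(B))\leq\mu(B)$. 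Combining these two inequalities forces $\mu(A)=\mu(B)$.

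The conceptual content is then immediate: the value of $\mu$ on a non-definite matrix depends only on the pair $(||A_+||_\infty,||A_-||_\infty)$, because any two non-definite matrices sharing this pair are assigned the same value. Concretely, I would define $f:(0,\infty)\times(0,\infty)\to\mathbb{R}$ by setting $f(s,t)=\mu(C)$ for any non-definite matrix $C$ with $||C_+||_\infty=s$ and $||C_-||_\infty=t$; the argument above shows this is well-defined, i.e.\ independent of the choice of representative $C$. (For any $s,t>0$ such a representative exists, e.g.\ the $2\times2$ diagonal matrix $\mathrm{diag}(s,-t)$, so $f$ is defined on all of $(0,\infty)\times(0,\infty)$.) By construction $\mu(A)=f(||A_+||_\infty,||A_-||_\infty)$ for every non-definite $A$, which is exactly the claim that $\mu$ is a function of $||A_+||_\infty$ and $||A_-||_\infty$ on non-definite matrices.

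I expect no real obstacle here, since all the work has been front-loaded into Lemma \ref{lemmaspu}; the corollary is essentially a formal consequence of the two-way reachability established there. The only point requiring a moment of care is the well-definedness of $f$, which is precisely what the sandwich $\mu(A)=\mu(B)$ guarantees, and the observation that the domain $(0,\infty)\times(0,\infty)$ is genuinely covered by non-definite matrices (the positive and negative parts of a non-definite matrix are both nonzero, so both norms are strictly positive, and every such pair is realized). Both are routine, so the proof is short.
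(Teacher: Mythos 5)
Your proposal is correct and follows essentially the same route as the paper: both deduce $\mu(A)=\mu(B)$ from the two-way reachability in Lemma \ref{lemmaspu} together with the monotonicity of $\mu$ (the paper phrases it as a contradiction, you phrase it directly and add the routine well-definedness check). No substantive difference.
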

\begin{proof}
Suppose that the statement of the corollary was not true. Then, there would exist a pair of Hermitian non-definite matrices $A,B$ such that $||A_+||_\infty=||B_+||_\infty$ and $||A_-||_\infty=||B_-||_\infty$ and a PU-monotone $\mu$ such that $\mu(A)<\mu(B)$. However, this would be in contradiction with the fact that there exists a PU map $\Phi$ such that $\Phi(A)=B$ as Lemma \ref{lemmaspu} dictates.
\end{proof}

\begin{lemma}\label{lemmapumon}
Any PU-monotone $\mu(A)$ is a monotonically non-decreasing function of both $||A_+||_\infty$ and $||A_-||_\infty$ when restricted to non-definite matrices.
\end{lemma}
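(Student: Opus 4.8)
The plan is to combine Corollary \ref{corollarypumon} with a single explicit PU transformation that lowers one of the two norms while keeping the other fixed. By Corollary \ref{corollarypumon}, restricting to non-definite matrices we may write $\mu(A)=f(||A_+||_\infty,||A_-||_\infty)$ for a well-defined function $f:(0,\infty)\times(0,\infty)\to\mathbb{R}$; every pair of strictly positive values is attained, e.g.\ by a diagonal $2\times2$ matrix, so $f$ is genuinely a function of two free positive variables. It therefore suffices to show that $f$ is non-decreasing in each of its two arguments separately, and this reduces to exhibiting, for the prescribed norm data, one PU map that witnesses the desired inequality through the monotonicity of $\mu$.

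For the first argument I would proceed as follows. Given $a_1\geq a_2>0$ and $b>0$, take $A=\mathrm{diag}(a_1,-b)$ and $B=\mathrm{diag}(a_2,-b)$ in $H_2$, both non-definite with $||A_+||_\infty=a_1$, $||B_+||_\infty=a_2$ and $||A_-||_\infty=||B_-||_\infty=b$. Writing $P_1,P_2$ for the two diagonal rank-one projections, I would define $\Phi(X)=\sum_{i,j}p_{ij}\tr(P_iX)P_j$ with non-negative coefficients $p_{ij}$. Such a map is automatically positive, since for $X\geq0$ each $\tr(P_iX)\geq0$ and hence $\Phi(X)$ is a non-negative combination of the $P_j$, and it is unital precisely when $\sum_i p_{ij}=1$ for each $j$. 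Imposing $\Phi(A)=B$ together with these column-stochasticity constraints forces $p_{12}=0$, $p_{22}=1$, $p_{11}=(a_2+b)/(a_1+b)$ and $p_{21}=(a_1-a_2)/(a_1+b)$, all non-negative exactly because $a_1\geq a_2>0$ and $b>0$. Monotonicity of $\mu$ then yields $f(a_2,b)=\mu(B)=\mu(\Phi(A))\leq\mu(A)=f(a_1,b)$.

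For the second argument I would argue symmetrically. One option is to repeat the construction with $A=\mathrm{diag}(a,-b_1)$ and $B=\mathrm{diag}(a,-b_2)$ for $b_1\geq b_2>0$, the analogous coefficients being $p_{11}=1$, $p_{21}=0$, $p_{12}=(b_1-b_2)/(a+b_1)$ and $p_{22}=(a+b_2)/(a+b_1)$. A cleaner option is to observe that $\tilde\mu(A):=\mu(-A)$ is again a PU-monotone, since $\tilde\mu(\Phi(A))=\mu(\Phi(-A))\leq\mu(-A)=\tilde\mu(A)$ for every PU map $\Phi$, and that passing to $\tilde\mu$ exchanges the roles of $||A_+||_\infty$ and $||A_-||_\infty$; non-decrease in the second argument then follows from the first case applied to $\tilde\mu$.

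I do not expect any genuine obstacle beyond bookkeeping. The two points requiring care are that the chosen matrices be truly non-definite, so that Corollary \ref{corollarypumon} applies and both norms are strictly positive, and that the stochastic coefficients remain non-negative, which is precisely where the hypotheses $a_1\geq a_2$ (respectively $b_1\geq b_2$) enter. Crucially, the positivity of $\Phi$ is immediate from its form $\sum_{i,j}p_{ij}\tr(P_iX)P_j$ with $p_{ij}\geq0$, and unitality from column-stochasticity, so the whole argument is self-contained and makes no appeal to the (at this stage still unproved) sufficiency direction of Theorem \ref{puth}.
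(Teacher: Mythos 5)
Your proposal is correct and follows essentially the same route as the paper: reduce via Corollary \ref{corollarypumon} to a function of the two norms, exhibit an explicit $2\times2$ PU map lowering $||A_+||_\infty$ while fixing $||A_-||_\infty$ (your map $\sum_{i,j}p_{ij}\tr(P_iX)P_j$ with column-stochastic non-negative coefficients is exactly the paper's map $\mathrm{diag}(qa+(1-q)b,\,b)$ in different notation), and dispose of the second argument by the $A\mapsto-A$ symmetry. No gaps.
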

\begin{proof}
Using the previous corollary, we only need to see that for every pair of real numbers $r_1>r_2>0$ there exists a Hermitian matrix $A$ and a PU map $\Phi$ such that $||A_+||_\infty=r_1$, $||(\Phi(A))_+||_\infty=r_2$ for any value of $||A_-||_\infty=||(\Phi(A))_-||_\infty>0$ (notice that by linearity we then have that $\Phi(-A)=-\Phi(A)$ and this automatically implies that the function must also be monotonically non-decreasing with respect to the norm of the negative part). For this it suffices to construct a PU map $\Phi:H_2\to H_2$ such that
\begin{equation}
\Phi\left[\left(
            \begin{array}{cc}
              1 & 0 \\
              0 & -x \\
            \end{array}
          \right)\right]=\left(
                           \begin{array}{cc}
                             p & 0 \\
                             0 & -x \\
                           \end{array}
                         \right)
\end{equation}
for any $0<p<1$ and any $x>0$. The following map (which is obviously PU for any $0\leq q\leq1$),
\begin{equation}
\Phi\left[\left(
            \begin{array}{cc}
              a & c \\
              \bar c & b \\
            \end{array}
          \right)\right]=\left(
                           \begin{array}{cc}
                             qa + (1-q)b & 0 \\
                             0 & b \\
                           \end{array}
                         \right),
\end{equation}
does the job choosing
\begin{equation}
q=\frac{p+x}{1+x}\in(0,1).
\end{equation}
\end{proof}
This allows us to obtain the desired conclusion.
\begin{lemma}\label{lemmaspufin}
Let $A\in H_n$ and $B\in H_k$ be neither positive nor negative definite matrices such that $||A_+||_\infty\geq||B_+||_\infty$ and $||A_-||_\infty\geq||B_-||_\infty$. Then, there exists a PU linear map $\Phi:H_n\to H_k$ such that $\Phi(A)=B$.
\end{lemma}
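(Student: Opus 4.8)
The plan is to prove the lemma \emph{non-constructively} by composing the existence results already in hand, reducing the general situation to the two-dimensional case. Write $a_\pm=||A_\pm||_\infty$ and $b_\pm=||B_\pm||_\infty$. As in the preceding lemmas, $A$ and $B$ are neither positive nor negative semidefinite, so $a_\pm,b_\pm>0$, and by hypothesis $a_+\geq b_+$ and $a_-\geq b_-$. The key observation is that Lemma \ref{lemmaspu} lets one pass freely, in both directions, between any non-definite matrix and the canonical $2\times2$ diagonal representative carrying the same pair of $\infty$-norms; hence it suffices to realize the required reduction of the norms on these representatives, where the explicit $2\times2$ map built in the proof of Lemma \ref{lemmapumon} is available.

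Concretely, I would proceed in (up to) four composable steps. First, since $A\in H_n$ and $D_A:=\mathrm{diag}(a_+,-a_-)\in H_2$ are both non-definite with equal pairs of norms, Lemma \ref{lemmaspu} supplies a PU map $H_n\to H_2$ sending $A\mapsto D_A$. Second, if $a_+>b_+$, the map $\Phi(X)=\mathrm{diag}(q X_{11}+(1-q)X_{22},\,X_{22})$ from the proof of Lemma \ref{lemmapumon}, with the convex weight $q\in(0,1)$ chosen so that $q a_+-(1-q)a_-=b_+$, is PU and sends $\mathrm{diag}(a_+,-a_-)\mapsto\mathrm{diag}(b_+,-a_-)$, lowering the positive norm to $b_+$ while leaving the $(2,2)$ entry, hence the negative norm, fixed; if $a_+=b_+$ this step is the identity. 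Third, the symmetric map $\Psi(X)=\mathrm{diag}(X_{11},\,q'X_{22}+(1-q')X_{11})$, which fixes the positive coordinate and diminishes the magnitude of the negative one, is PU for a suitable $q'\in(0,1)$ and sends $\mathrm{diag}(b_+,-a_-)\mapsto\mathrm{diag}(b_+,-b_-)=:D_B$ when $a_->b_-$ (and is the identity otherwise); this is exactly the negative-part analogue alluded to by the linearity remark in the proof of Lemma \ref{lemmapumon}. Fourth, $D_B\in H_2$ and $B\in H_k$ are once more both non-definite with matching pairs of norms, so a final application of Lemma \ref{lemmaspu} yields a PU map $H_2\to H_k$ sending $D_B\mapsto B$. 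Composing the four maps and invoking that PU maps contain the identity and are closed under composition produces the desired $\Phi:H_n\to H_k$ with $\Phi(A)=B$.

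Two points require care. All the intermediate matrices $\mathrm{diag}(a_+,-a_-)$, $\mathrm{diag}(b_+,-a_-)$ and $\mathrm{diag}(b_+,-b_-)$ possess one strictly positive and one strictly negative eigenvalue, so they remain non-definite; this is precisely the hypothesis needed for Lemma \ref{lemmaspu} to apply at each junction and for the reduction maps to act as intended. Moreover, each reduction step must be skipped (replaced by the identity) whenever the corresponding inequality is not strict, since the weights $q,q'$ land in the open interval $(0,1)$ only when $b_+<a_+$, respectively $b_-<a_-$; the case $a_+=b_+$ and $a_-=b_-$ is then covered directly by Lemma \ref{lemmaspu} alone. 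The obstacle one might anticipate, namely extending the single $2\times2$ reduction of Lemma \ref{lemmapumon} to arbitrary dimension and arbitrary spectrum, is exactly what is sidestepped here by sandwiching that elementary reduction between the two dimension-changing equivalences furnished by Lemma \ref{lemmaspu}, so that the only genuinely new content is the bookkeeping of this composition.
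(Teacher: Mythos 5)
Your proof is correct, but it takes a genuinely different route from the paper. The paper's proof is the abstract, non-constructive one: it combines Lemma \ref{lemmapumon} (every PU-monotone is a non-decreasing function of $||A_+||_\infty$ and $||A_-||_\infty$ on non-definite matrices) with the observation that the reachability indicator $\mu_B(X)$, equal to $1$ precisely when some PU map sends $X$ to $B$, is itself a PU-monotone; the inequality $\mu_B(A)\geq\mu_B(B)=1$ then forces the existence of the map without ever exhibiting it. Your argument instead builds the map explicitly as a composition $H_n\to H_2\to H_2\to H_2\to H_k$, passing through the canonical representatives $\mathrm{diag}(a_+,-a_-)$, $\mathrm{diag}(b_+,-a_-)$ and $\mathrm{diag}(b_+,-b_-)$, using Lemma \ref{lemmaspu} at the two dimension-changing junctions and the elementary $2\times2$ averaging maps (with the weights $q=(b_++a_-)/(a_++a_-)$ and $q'=(b_++b_-)/(a_-+b_+)$, which I have checked land in $(0,1]$ exactly as you claim) for the two norm reductions; all intermediate matrices have one strictly positive and one strictly negative eigenvalue, so the hypotheses of Lemma \ref{lemmaspu} are met at each stage, and the edge cases of equality are correctly handled by inserting the identity. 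What your approach buys is an explicit transformation and independence from Corollary \ref{corollarypumon} and Lemma \ref{lemmapumon} (which you use only as a source of a concrete $2\times2$ map, not for their statements about monotones); what the paper's approach buys is brevity and a template that transfers verbatim to the PTP case. One small quibble: you describe your plan as ``non-constructive,'' but it is in fact the more constructive of the two, since Lemma \ref{lemmaspu} is proved in the paper by an explicit construction.
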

\begin{proof}
It is a consequence of the premise and Lemma \ref{lemmapumon} that for every PU-monotone $\mu$ it must hold that $\mu(A)\geq \mu(B)$. Considering now the following functional, which is obviously by construction a PU-monotone:
\begin{equation}
\mu_B(X)=\left\{\begin{array}{c}
                  0 \quad\nexists\Phi\in PU :\Phi(X)=B, \\
                  1 \quad\exists\Phi\in PU :\Phi(X)=B,
                \end{array}\right.
\end{equation}
it must then hold that $\mu_B(A)\geq\mu_B(B)=1$. Therefore, $\mu_B(A)=1$ and the result follows.
\end{proof}

\subsection{Trace-preserving maps}

\begin{lemma}\label{lemmasptp}
Let $A\in H_n$ and $B\in H_k$ be such that $||A_+||_1=||B_+||_1$ and $||A_-||_1=||B_-||_1$. Then, there exists a PTP linear map $\Phi:H_n\to H_k$ such that $\Phi(A)=B$ and a PTP linear map $\Phi':H_k\to H_n$ such that $\Phi'(B)=A$.
\end{lemma}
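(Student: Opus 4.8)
The hypotheses $||A_+||_1=||B_+||_1$ and $||A_-||_1=||B_-||_1$ are symmetric in $A$ and $B$, so the plan is to produce a single PTP map $\Phi:H_n\to H_k$ with $\Phi(A)=B$; running the same construction with the roles of $A$ and $B$ interchanged then yields the map $\Phi':H_k\to H_n$ with $\Phi'(B)=A$. I would follow the template of the proof of Lemma \ref{lemmaspu}: writing the spectral decompositions $A=\sum_i\lambda_i P_i$ and $B=\sum_j\mu_j Q_j$ in terms of rank-one mutually orthogonal projections $\{P_i\}$ and $\{Q_j\}$, I would define $\Phi$ by $\Phi(P_i)=\sum_j p_{ij}Q_j$ with $p_{ij}\geq0$ and send the orthogonal complement of $\mathrm{span}\{P_i\}$ (with respect to $\langle X,Y\rangle=\tr(XY)$) into the kernel. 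Exactly as in the unital case, positivity is then automatic: for $X\geq0$ the diagonal entries $\tr(XP_i)$ in the eigenbasis of $A$ are non-negative, so $\Phi(X)$ is a non-negative combination of the $Q_j$. The only change with respect to Lemma \ref{lemmaspu} is the gauge condition: trace preservation forces $\tr\Phi(P_i)=\sum_j p_{ij}=1$ for every $i$, i.e.\ the matrix $(p_{ij})$ must now be row-stochastic rather than column-stochastic, while $\Phi(A)=B$ is the requirement $\sum_i\lambda_i p_{ij}=\mu_j$ for every $j$.

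The problem is thus reduced to exhibiting a non-negative row-stochastic matrix $(p_{ij})$ with $\sum_i\lambda_i p_{ij}=\mu_j$. The key observation is that the row-stochastic freedom, in contrast to the column-stochastic constraint of the unital case (which forced each output eigenvalue to be a convex combination of the input eigenvalues and hence to lie in $[\lambda_{\min},\lambda_{\max}]$), lets each $\mu_j$ be an \emph{arbitrary} non-negative combination of the $\lambda_i$. I would therefore decouple the two halves of the spectrum: route the positive eigenprojections of $A$ only onto the positive eigenprojections of $B$, the negative ones only onto the negative ones, and dispose of any zero eigenvalues freely. Writing $\lambda_i^\pm>0$ and $\mu_j^\pm>0$ for the moduli of the positive and negative eigenvalues, I would set
\begin{equation}
p_{ij}=\frac{\mu_j^+}{||A_+||_1}\ \text{for positive }i,j,\qquad p_{ij}=\frac{\mu_j^-}{||A_-||_1}\ \text{for negative }i,j,
\end{equation}
with all other value-relevant entries equal to $0$, and route each zero-eigenvalue source to an arbitrary sink.

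It then remains to verify this choice works, which the two $\ell_1$ balances make immediate. Using $||A_+||_1=||B_+||_1=\sum_j\mu_j^+$, each positive row sums to $\sum_j\mu_j^+/||A_+||_1=1$, and positive column $j$ gives $\sum_i\lambda_i^+\mu_j^+/||A_+||_1=\mu_j^+$ since $\sum_i\lambda_i^+=||A_+||_1$; the negative block is identical via $||A_-||_1=||B_-||_1$, and the zero sources contribute $0$ to every value because they carry eigenvalue $0$. The degenerate cases in which $A$ (hence $B$) is positive or negative semidefinite are subsumed automatically, since then one of $||A_\pm||_1$ vanishes, the corresponding block of sources and sinks is empty, and only the other block and the zero eigenvalues remain. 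I do not anticipate a genuine obstacle: the only points requiring care are the verification of positivity of the extended map (inherited verbatim from Lemma \ref{lemmaspu}) and the bookkeeping of zero eigenvalues and of the empty-block edge cases. The essential content is simply that, once trace preservation is translated into row-stochasticity, the feasibility of $(p_{ij})$ becomes a product-form transportation problem whose solvability is guaranteed precisely by the hypothesized equalities of the positive and negative $\ell_1$ norms.
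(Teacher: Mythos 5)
Your construction is correct and is essentially the paper's own proof: the choice $p_{ij}=\mu_j^{+}/||A_+||_1$ on the positive block and $p_{ij}=\mu_j^{-}/||A_-||_1$ on the negative block is precisely the paper's block map $\Psi_\pm(X)=\frac{\tr X}{\tr D_B^\pm}D_B^\pm$ rewritten in the projection formalism of Lemma \ref{lemmaspu}, with the two $\ell_1$ balances playing the identical role. The only difference is presentational --- the paper packages the map as a composition of unitary conjugations with a block trace-replacer, whereas you handle the basis change and the zero-eigenvalue bookkeeping directly --- so no further comment is needed.
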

\begin{proof}
As in the PU case, it suffices to construct a linear PTP map that takes any given $A\in H_n$ to any given $B\in H_k$ such that $||A_+||_1=||B_+||_1$ and $||A_-||_1=||B_-||_1$. Suppose that $A$ and $B$ are unitarily diagonalized as $A=UD_AU^\ast$ and $B=VD_BV^\ast$, where the diagonal matrices are block-partitioned as
\begin{equation}
D_A=\left(
      \begin{array}{cc}
        D_A^+ & 0 \\
        0 & -D_A^- \\
      \end{array}
    \right),\quad D_B=\left(
      \begin{array}{cc}
        D_B^+ & 0 \\
        0 & -D_B^- \\
      \end{array}
    \right),
\end{equation}
where $D_A^+\in H_{r},D_B^+\in H_s$ are diagonal positive semidefinite matrices and $D_A^-\in H_{n-r},D_B^-\in H_{k-s}$ are diagonal positive definite matrices with the property that $\tr D_A^+=\tr D_B^+$ and $\tr D_A^-=\tr D_B^-$. I will now construct explicitly a map $\Phi:H_n\to H_k$ achieving $\Phi(A)=B$ as a composition of maps that are obviously PTP with the result that $\Phi$ must be PTP then too. In particular, let $\Phi=\Phi_3\circ\Phi_2\circ\Phi_1$ with $\Phi_1:H_n\to H_n$, $\Phi_2:H_n\to H_k$ and $\Phi_3:H_k\to H_k$ given by
\begin{align}
&\Phi_1(X)=U^\ast X U,\quad \Phi_3(X)=VXV^\ast,\nonumber\\
&\Phi_2\left[\left(
              \begin{array}{cc}
                X & Z \\
                Z^\ast & Y \\
              \end{array}
            \right)\right]=\left(
                             \begin{array}{cc}
                               \Psi_+(X) & 0 \\
                               0 & \Psi_-(Y) \\
                             \end{array}
                           \right),
\end{align}
where $\Psi_+:H_r\to H_s$ and $\Psi_-:H_{n-r}\to H_{k-s}$ are the PTP maps given by
\begin{equation}
\Psi_\pm(X)=\frac{\tr X}{\tr D_B^\pm}D_B^\pm.
\end{equation}
\end{proof}

\begin{corollary}\label{corollaryptpmon}
Any PTP-monotone $\mu(A)$ is a function of $||A_+||_1$ and $||A_-||_1$.
\end{corollary}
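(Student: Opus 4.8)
The plan is to reproduce the argument used for Corollary \ref{corollarypumon}, replacing the operator norm by the trace norm and Lemma \ref{lemmaspu} by its trace-preserving analogue Lemma \ref{lemmasptp}. The decisive ingredient is the two-way reachability furnished by that lemma: whenever two Hermitian matrices $A$ and $B$ satisfy $||A_+||_1=||B_+||_1$ and $||A_-||_1=||B_-||_1$, there exist PTP maps realizing both $A\to B$ and $B\to A$. Feeding this into the defining inequality of a PTP-monotone immediately pins $\mu$ to a single value on every such pair.

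Concretely, let $A\in H_n$ and $B\in H_k$ be any Hermitian matrices with $||A_+||_1=||B_+||_1$ and $||A_-||_1=||B_-||_1$. By Lemma \ref{lemmasptp} there is a PTP map $\Phi$ with $\Phi(A)=B$ and a PTP map $\Phi'$ with $\Phi'(B)=A$. The monotonicity of $\mu$ then gives $\mu(B)=\mu(\Phi(A))\leq\mu(A)$ and $\mu(A)=\mu(\Phi'(B))\leq\mu(B)$, whence $\mu(A)=\mu(B)$. Since $\mu$ thus takes the same value on all Hermitian matrices sharing a given pair $(||A_+||_1,||A_-||_1)$, it is a function of these two quantities alone, which is exactly the assertion. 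Equivalently, one may phrase this by contradiction as in Corollary \ref{corollarypumon}: a putative pair with $\mu(A)<\mu(B)$ is ruled out by the map $\Phi$ satisfying $\Phi(A)=B$.

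I do not expect any real obstacle, as all the substantive work has already been discharged in the explicit construction of Lemma \ref{lemmasptp}; the corollary is a one-line consequence of two-sided reachability together with monotonicity. One point deserves emphasis, however. Unlike Lemma \ref{lemmaspu}, whose hypothesis forces the ``restricted to non-definite matrices'' caveat in Corollary \ref{corollarypumon}, Lemma \ref{lemmasptp} is stated for arbitrary Hermitian matrices; consequently the present corollary holds for all matrices in $\bigcup_{n}H_n$ without any such restriction.
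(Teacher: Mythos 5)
Your proof is correct and follows the same route as the paper, which simply invokes the argument of Corollary \ref{corollarypumon} with Lemma \ref{lemmasptp} in place of Lemma \ref{lemmaspu}. Your added observation that no non-definiteness restriction is needed here is also accurate and consistent with how the paper states the corollary.
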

\begin{proof}
Same argument as in Corollary \ref{corollarypumon} using now Lemma \ref{lemmasptp}.
\end{proof}

\begin{lemma}\label{lemmaptpmon}
Any PTP-monotone $\mu(A)$ is a function of $\tr A$ and $||A_+||_1$, being in addition monotonically non-decreasing in this second argument.
\end{lemma}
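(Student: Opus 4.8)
The plan is to build directly on Corollary \ref{corollaryptpmon} and to proceed in exact analogy with the PU case (Lemma \ref{lemmapumon}). For the first assertion I would simply note that the pair $(\tr A,||A_+||_1)$ carries the same information as the pair $(||A_+||_1,||A_-||_1)$, because $\tr A=||A_+||_1-||A_-||_1$ and hence $||A_-||_1=||A_+||_1-\tr A$. Consequently a function of $||A_+||_1$ and $||A_-||_1$ is automatically a function of $\tr A$ and $||A_+||_1$, and Corollary \ref{corollaryptpmon} immediately yields the stated functional dependence of any PTP-monotone $\mu$.

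The substantive part is the claimed monotonicity in the second argument. Here I would observe that, thanks to the functional dependence just established, it suffices to exhibit, for every $t\in\mathbb{R}$ and every pair of admissible values $r_1>r_2\geq\max\{0,t\}$, one Hermitian matrix $A$ with $\tr A=t$ and $||A_+||_1=r_1$ together with a PTP map $\Phi$ such that $\tr\Phi(A)=t$ and $||(\Phi(A))_+||_1=r_2$; then $\mu(\Phi(A))\leq\mu(A)$ reads off as $\mu$ being non-decreasing in $||A_+||_1$ at fixed trace. (At fixed $t$ the only admissible values of $||A_+||_1$ are $r\geq\max\{0,t\}$, so that $||A_-||_1=r-t\geq0$, which explains the range of $r_2$.) For the construction I would work on $H_2$ with the diagonal representatives $A=\mathrm{diag}(r_1,\,t-r_1)$ and target $\mathrm{diag}(r_2,\,t-r_2)$: since $r_i\geq\max\{0,t\}$ one has $t-r_i\leq0$, so these have trace $t$ and positive-part norms $r_1$ and $r_2$ respectively. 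The natural interpolating PTP map is the depolarizing-type mixture
\begin{equation}
\Phi(X)=qX+(1-q)\frac{\tr X}{2}\one,\qquad 0\leq q\leq 1,
\end{equation}
which is manifestly trace-preserving and positive, being a convex combination of the identity and $X\mapsto\frac{\tr X}{2}\one$, both of which are PTP. Applying $\Phi$ to $A$ and matching the first diagonal entry forces $q=\frac{2r_2-t}{2r_1-t}$, and the second entry is then automatically correct since the trace is preserved.

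The only genuine work — and hence the ``hard part'' — is the routine verification that this $q$ lies in $[0,1)$ for the whole admissible parameter range, which I would dispatch by cases on the sign of $t$. Using $r_1>r_2\geq\max\{0,t\}$ one checks $r_1>t$ (so the denominator $2r_1-t>r_1>0$) and $r_2\geq\max\{0,t\}$ (so the numerator $2r_2-t=r_2+(r_2-t)\geq0$), giving $q\geq0$; and $q<1$ is equivalent to $r_2<r_1$. The mild subtlety to keep an eye on is the handling of every sign of the trace together with the degenerate boundary situations in which $A$ or $\Phi(A)$ becomes semidefinite (e.g.\ $r_2=\max\{0,t\}$, where one part vanishes), but the formulas above remain valid there. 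Everything else is a direct transcription of the PU argument, with the $\infty$-norm replaced by the $1$-norm and the extra bookkeeping that the trace constraint now ties $||A_-||_1$ to $||A_+||_1$.
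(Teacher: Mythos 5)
Your proof is correct and follows essentially the same route as the paper: reduce the first claim to Corollary \ref{corollaryptpmon} via $||A_-||_1=||A_+||_1-\tr A$, then establish monotonicity by exhibiting, for each admissible $(t,r_1,r_2)$, a single $2\times 2$ diagonal matrix and a PTP map lowering $||A_+||_1$ at fixed trace. The only difference is the explicit $2\times 2$ map --- you use the depolarizing mixture $qX+(1-q)\tfrac{\tr X}{2}\one$ where the paper transfers weight from the first diagonal entry to the second via $\mathrm{diag}(a,b)\mapsto\mathrm{diag}(pa,\,b+(1-p)a)$ --- and both choices, with your verification that $q\in[0,1)$, do the job.
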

\begin{proof}
That any monotone is a function of $\tr A$ and $||A_+||_1$ follows straightforwardly from the previous corollary by noticing that $||A_-||_1=||A_+||_1-\tr A$. Thus, as in the unital case, it remains to see that for any $t\in\mathbb{R}$ and for any pair of real numbers $r_1>r_2\geq \max\{t,0\}$ there exists a Hermitian matrix $A$ and a PTP map $\Phi$ such that $||A_+||_1=r_1$, $||(\Phi(A))_+||_1=r_2$ and $\tr A=\tr\Phi(A)=t$. For this it suffices to construct a PTP map $\Phi:H_2\to H_2$ such that
\begin{equation}
\Phi\left[\left(
            \begin{array}{cc}
              x & 0 \\
              0 & -y \\
            \end{array}
          \right)\right]=\left(
                           \begin{array}{cc}
                             px & 0 \\
                             0 & x-y-px \\
                           \end{array}
                         \right)
\end{equation}
for any $x,y>0$ and any $\max\{0,(x-y)/x\}\leq p<1$. The following map (which is obviously PTP for any $0\leq p\leq1$) does the job:
\begin{equation}
\Phi\left[\left(
            \begin{array}{cc}
              a & c \\
              \bar c & b \\
            \end{array}
          \right)\right]=\left(
                           \begin{array}{cc}
                             pa & 0 \\
                             0 & b+(1-p)a \\
                           \end{array}
                         \right).
\end{equation}
\end{proof}

As in the previous subsection, we are now in the position to obtain the desired claim.

\begin{lemma}
Let $A\in H_n$ and $B\in H_k$ be such that $\tr A=\tr B$ and $||A_+||_1\geq||B_+||_1$ (and, hence, $||A_-||_1\geq||B_-||_1$). Then, there exists a PTP linear map $\Phi:H_n\to H_k$ such that $\Phi(A)=B$.
\end{lemma}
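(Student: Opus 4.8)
The plan is to prove this non-constructively, mirroring exactly the argument used for the unital case in Lemma \ref{lemmaspufin}, and exploiting the characterization of PTP-monotones established in Lemma \ref{lemmaptpmon} together with a tailored indicator monotone. No fresh construction of a map is needed: all the explicit work has already been carried out in Lemma \ref{lemmasptp}, and the present statement will follow by a short abstract argument.

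First I would observe that the premise, combined with Lemma \ref{lemmaptpmon}, forces $\mu(A)\geq\mu(B)$ for \emph{every} PTP-monotone $\mu$. By that lemma any such $\mu$ is a function $f(\tr X,||X_+||_1)$ that is monotonically non-decreasing in its second argument. Since the hypotheses give $\tr A=\tr B$ and $||A_+||_1\geq||B_+||_1$, one gets $\mu(A)=f(\tr A,||A_+||_1)=f(\tr B,||A_+||_1)\geq f(\tr B,||B_+||_1)=\mu(B)$, using the non-decreasing dependence and the equality of traces. (The parenthetical hypothesis $||A_-||_1\geq||B_-||_1$ needs no separate treatment: it is automatic from $||X_-||_1=||X_+||_1-\tr X$.)

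Second, I would manufacture a specific monotone that detects reachability of $B$:
\[
\mu_B(X)=\left\{\begin{array}{c} 0 \quad\nexists\,\Phi\in\text{PTP}:\Phi(X)=B, \\ 1 \quad\exists\,\Phi\in\text{PTP}:\Phi(X)=B. \end{array}\right.
\]
The point requiring verification is that $\mu_B$ genuinely qualifies as a PTP-monotone, and this is where the structural property of the class of maps enters. For any PTP map $\Psi$, if $\mu_B(\Psi(X))=1$, witnessed by a PTP map $\Phi$ with $\Phi(\Psi(X))=B$, then $\Phi\circ\Psi$ is again PTP---the PTP maps contain the identity and are closed under composition---and it sends $X$ to $B$; hence $\mu_B(X)=1$. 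Therefore $\mu_B(\Psi(X))\leq\mu_B(X)$ holds in all cases.

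Finally, applying the inequality of the first step to this particular monotone yields $\mu_B(A)\geq\mu_B(B)=1$, so $\mu_B(A)=1$, which is precisely the assertion that a PTP map $\Phi:H_n\to H_k$ with $\Phi(A)=B$ exists. I do not anticipate any genuine obstacle, since the heavy machinery is already in place; the only step meriting a moment's care is the closure-under-composition check that makes $\mu_B$ a legitimate monotone.
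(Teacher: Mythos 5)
Your proposal is correct and follows exactly the paper's own argument: invoke Lemma \ref{lemmaptpmon} to get $\mu(A)\geq\mu(B)$ for every PTP-monotone, then apply this to the indicator monotone $\mu_B$ (whose monotonicity the paper declares obvious and you usefully verify via closure under composition). No differences worth noting.
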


\begin{proof}
The argument is completely analogous to that of Lemma \ref{lemmaspufin} for PU maps. By Lemma \ref{lemmaptpmon}, the conditions on the matrices $A,B$ impose that $\mu(A)\geq \mu(B)$ must hold for every PTP-monotone $\mu$. Considering now the particular PTP-monotone:
\begin{equation}
\mu_B(X)=\left\{\begin{array}{c}
                  0 \quad\nexists\Phi\in PTP :\Phi(X)=B, \\
                  1 \quad\exists\Phi\in PTP :\Phi(X)=B,
                \end{array}\right.
\end{equation}
we obtain that $\mu_B(A)\geq\mu_B(B)=1$, which leads to the desired result.
\end{proof}

\section{Proof of Theorem \ref{puthg}}\label{secputhg}

In this section I prove Theorem \ref{puthg}. This serves as an alternative proof of Theorem \ref{puth} without relying on the theory of monotones and allows to characterize transformations by PU maps without making an explicit connection to the quantification of the lack of positive semidefiniteness of a matrix.

\begin{proof}
On the one hand, in order to see the implication in one direction, notice that for any $A\in H_n$ it holds that $A-\lambda_{\min}(A)\one\geq0$ and $-A+\lambda_{\max}(A)\one\geq0$. Thus, for any PU map $\Phi:H_n\to H_k$ we have that
\begin{align}
\Phi(A-\lambda_{\min}(A)\one)=\Phi(A)-\lambda_{\min}(A)\one\geq0&\Rightarrow\lambda_{\min}(\Phi(A))\geq\lambda_{\min}(A),\nonumber\\
\Phi(-A+\lambda_{\max}(A)\one)=-\Phi(A)+\lambda_{\max}(A)\one\geq0&\Rightarrow\lambda_{\max}(\Phi(A))\leq\lambda_{\max}(A).
\end{align}
On the other hand, in order to see the implication in the other direction, let $A\in H_n$ and $B\in H_k$ fulfill the eigenvalue conditions of the statement of the theorem and denote their spectral decompositions by
\begin{equation}
A=\sum_i\lambda_iP_i,\quad B=\sum_i\mu_iQ_i,
\end{equation}
where $\{\lambda_i\}$ ($\{\mu_i\}$) are the eigenvalues of $A$ ($B$) and the $\{P_i\}$ and $\{Q_i\}$ are rank-one mutually orthogonal orthogonal projections such that $\sum_iP_i=\one$ and $\sum_iQ_i=\one$. The fact that $\lambda_{\max}(A)\geq\lambda_{\max}(B)$ and $\lambda_{\min}(A)\leq\lambda_{\min}(B)$ means that every eigenvalue of $B$ lies in the convex hull of the eigenvalues of $A$ and, therefore, for every $j$ it holds that $\mu_j=\sum_ip_{ij}\lambda_i$ with $p_{ij}\geq0$ $\forall i$ and $\sum_ip_{ij}=1$. Using these convex weights, we can define now a map $\Phi$ to act as $\Phi(P_i)=\sum_jp_{ij}Q_j$ $\forall i$ and having in its kernel all elements in the orthogonal complement of span$\{P_i\}$ in $H_n$. This map is manifestly positive and
\begin{align}
\Phi(\one)&=\Phi\left(\sum_iP_i\right)=\sum_{ij}p_{ij}Q_j=\sum_jQ_j=\one,\nonumber\\
\Phi(A)&=\Phi\left(\sum_i\lambda_iP_i\right)=\sum_{ij}p_{ij}\lambda_iQ_j=\sum_j\mu_jQ_j=B.
\end{align}
\end{proof}

\section*{Acknowledgements}

I am grateful to the following institutions for financial support: Spanish MINECO (grants
MTM2017-88385-P and MTM2017-84098-P) and Comunidad de Madrid (grant QUITEMAD-CMS2018/TCS-4342 and the Multiannual Agreement with UC3M in the line of Excellence of University Professors EPUC3M23 in the context of the V PRICIT).

\end{document}